
\documentclass[11pt]{amsart}

\usepackage{amsmath}
\usepackage{amssymb}
\usepackage{graphicx}
\usepackage{color}

\newtheorem{theorem}{Theorem}[section]
\newtheorem{lem}[theorem]{Lemma}
\newtheorem{cor}[theorem]{Corollary}

\theoremstyle{definition}
\newtheorem{definition}[theorem]{Definition}

\theoremstyle{remark}

\theoremstyle{question}

\numberwithin{equation}{section}





\DeclareMathOperator*{\esssup}{ess\,sup}
\DeclareMathOperator*{\essinf}{ess\,inf}





\def\V{\Vert}

\keywords{Variable Lebesgue spaces, Fixed Point Property, nonexpansive mappings, Banach function lattices, Banach space geometry, order continuity. }
\subjclass[2010]{46E30, 47H09, 47H10, 46B20}
\begin{document}

\title[Fixed point properties and reflexivity  ]{ Fixed point properties and reflexivity in  variable Lebesgue spaces   }
\author{}

\author[T. Dom\'{\i}nguez ]{T. Dom\'{\i}nguez Benavides}
\address[T. Dom\'{\i}nguez Benavides]{Departamento de  An\'{a}lisis Matem\'{a}tico, Universidad de
Sevilla, C. Tarfia, s/n. 41012, Sevilla, Spain}
\email{\tt tomasd@us.es}

\author[M. A. Jap\'on ]{M. A. Jap\'on }
\address[M. A. Jap\'on ] {Departamento de An\'{a}lisis Matem\'{a}tico, Universidad de
Sevilla, C. Tarfia, s/n. 41012, 41012, Sevilla, Spain}
\email{\tt japon@us.es}

\thanks{The  authors  are partially supported by MICIU from the Spanish Government,
Grant PGC2018-098474-B-C21 and the Andalusian Regional Government, Grant
FQM-127. }

\begin{abstract} In this paper
the weak fixed point property ($w$-FPP) and the fixed point property (FPP) in Variable Lebesgue Spaces are studied. Given $(\Omega,\Sigma,\mu)$  a $\sigma$-finite measure and $p(\cdot)$ a variable exponent function, the $w$-FPP is completely characterized for the variable Lebesgue space $L^{p(\cdot)}(\Omega)$ in terms of the  exponent function $p(\cdot)$ and the absence of 
an isometric copy of $L_1[0,1]$. In particular,  every reflexive $L^{p(\cdot)}(\Omega)$ has the FPP and 
our results bring to light  the existence of some nonreflexive variable Lebesgue spaces satisfying the $w$-FPP, in sharp contrast with  the classic Lebesgue $L^p$-spaces. In connection with the FPP, we prove that Maurey's result for $L^1$-spaces  can be extended to the larger class of variable $L^{p(\cdot)}(\Omega)$ spaces with order continuous norm, that is,  every 
  reflexive subspace of $L^{p(\cdot)}(\Omega)$ has the FPP. 
Never\-theless,  Maurey's converse does not longer hold in the variable setting, since some   nonreflexive  subspaces of  $L^{p(\cdot)}(\Omega)$   satisfying the FPP can be found.  As a consequence, we discover that several nonreflexive Nakano sequence spaces $\ell^{p_n}$  do have the FPP endowed with the  Luxemburg norm. 
As far as the authors are concerned,  this family of  sequence  spaces gives rise to  the first  known nonreflexive    classic Banach spaces  enjoying  the FPP   without requiring of any renorming procedure. The failure of asympto\-tically isometric copies of $\ell_1$ in $L^{p(\cdot)}(\Omega)$ is also analyzed.

\end{abstract}

\maketitle

\section{Introduction }

The class  of Variable Lebesgue Spaces (VLSs) arises as a  generalization of
 classic Lebes\-gue spaces $L^p(\Omega)$,  when  the constant exponent $p$ is replaced with a variable
exponent  function $p(\cdot)$. The resulting  function space $L^{p(\cdot)}(\Omega)$ shares many of the geometrical  properties of the  $L^p$-spaces but also differ from them in some sort of  interesting and unexpected forms (as an example, VLSs are not translation invariant unless the exponent function $p(\cdot)$ is constant). Variable Lebesgue spaces  can  be traced back   in the literature  to 1931 \cite{Orl}  and they    lie within the scope of  the more general class  of   modular function  spaces, initially  defined  by H. Nakano \cite{N,Na,Nak} and studied  by Orlicz and Musielak \cite{MO}. However, the last two decades 
 have witnessed an explosive development  in the analysis of the intrinsic structure of  VLSs by their own right,   in particular,  since  M. R\.{u}\v{z}i\v{c}ka  discovered that they constitute a natural functional setting for the mathematical
model of electrorheological fluids \cite{Ru}. As Banach function spaces,
the  structure and geometrical properties of VLSs connected to Harmonic Analysis and some other areas within the scope of Functional Analysis  have been studied in  \cite{cruz, otro, KR, LuPiPo} and the references therein. 
However, a precise Fixed Point Theory for nonexpansive operators on this family of Banach  spaces seems to be in a very incipient state.
\medskip

We recall that a Banach space $(X,\V\cdot\V)$ is said to have the fixed point property (FPP) if every nonexpansive operator $T:C\to C$, with $C$ a closed convex bounded subset of $X$, has a fixed point. Besides, $X$ is said to have the weakly fixed point property  ($w$-FPP) when the above holds for all domains which are convex  and weakly compact. Here nonexpansiveness means  $\V Tx-Ty\V\le \V x-y\V$ for every $x,y\in C$  (note that nonexpansiveness is  strictly subject to the underlying norm). The FPP and $w$-FPP are equivalent under reflexivity and these properties have been extensively studied  in the framework of Banach spaces for the last 60 years, during which 
multiple and robust  connections   were displayed linking this area of Me\-tric Fixed Point Theory with  Geometry and Renorming Theory in Banach spaces. 
Although the first positive results for the existence of fixed points for nonexpansive mappings date back to 1965 \cite{Ki},   this theory is very far from being complete and there are still many interesting open problems left and some unsolved long-standing conjectures. In fact, 
 it was long believed that all Banach spaces fulfilling the FPP had to be reflexive. 
In 2008 P.K. Lin proved
 that this  statement was untrue by  providing the sequence space $\ell_1$ with an equivalent norm that let it have the FPP \cite{Lin-2008}. This  automatically meant that the fixed  point property could be extended  beyond reflexivity  (see also 
 \cite{ja, jfa, DJLT, DLT,  Maria-Carlos-2010, FG, Lin-2010}). In fact, as a consequence of some results included in the paper, we will show  that this is the case in some particular classes of Variable Lebesgue Spaces,   where neither reflexivity nor any renorming argument are needed for establishing the FPP.

\medskip

The organization of the article is the following: 

\medskip

In the second section we develop some preliminary results concerning modular function spaces and mainly related   to the class of  variable Lebesgue spaces, the reflexivity condition and the order continuity of the Luxemburg norm, which is the standard norm for VLSs.  
 
 \medskip

 The third section focuses on the proper study of   the $w$-FPP.  A complete description of  those VLSs satisfying the $w$-FPP is obtained in terms of the exponent function $p(\cdot)$. 
Due to Alspach's example \cite{Al}, who proved  that $L^1[0,1]$ fails to have the $w$-FPP,  we know that
classic  Lebesgue spaces $L^p(\Omega)$  have the $w$-FPP if and only if  they are reflexive. We can assert that this equivalence does not longer hold in our variable  setting. Furthermore,  we establish  the  equivalence between the $w$-FPP and the absence of an isometric copy of $L^1[0,1]$ when restricted to the family of VLSs.

\medskip

One significant breakthrough in Metric Fixed Point Theory  supporting  the  still open question as to  whether  \lq\lq   FPP is implied by reflexivity"  is  due to B. Maurey  in  1980.
Despite the fact that the Lebesgue space $L^1[0,1]$ fails to have the $w$-FPP \cite{Al}, B. Maurey proved that every closed reflexive subspace of $L^1[0,1]$ does have the FPP \cite{maurey}. Almost two decades later,  P. Dowling and C. Lennard  \cite{DL} obtained  a converse statement: every  closed subspace of $L^1[0,1]$ with the FPP is reflexive. Hence,  the FPP  is completely characterized by reflexivity within the family of closed subspaces of $L^1[0,1]$.

In the fourth section we prove that  Maurey's result can   be  literally extended to (nonreflexive) VLSs with order continuous Luxemburg norm, that is, {\it every  closed reflexive subspace of an order continuous VLS   satisfies the FPP}. Surprisingly,  P. Dowling and C. Lennard's converse does not hold in the variable framework, that is,   under some slightly weak assumptions, we can   prove that  {\it for every nonreflexive VLS there exists a further closed nonreflexive subspace  that does verify the FPP}.  Near-infinity concentrated norms defined   in \cite{jfa} will become an essential tool in our drive and will lead us to discover that 
 Nakano spaces $\ell^{p_n}$ when $(p_n)\subset (1,+\infty)$ and $\lim_n p_n=1$ endowed with the Luxemburg norm are nonreflexive Banach spaces that do have the FPP  (see Corollary \ref{fpp2} and comments afterwards). Note that, up to this stage, no classic nonreflexive Banach space endowed with its standard norm was known to have the FPP. 
 
 \medskip

Finally, it is well-known that the failure of the FPP can be linked with the existence of an asymptotically isometric copy of $\ell_1$ (see \cite{DL,handbook} and references therein). By using  the subsequence splitting lemma for Banach lattices \cite{W}, in the last section of the paper
 we analyze the failure of asymptotically isometric copies of $\ell_1$ in VLSs, unless the trivial case is considered (when $\ell_1$ is already contained isometrically in $L^{p(\cdot)}(\Omega)$).  Some open questions and new insights sparked by the previous results are also displayed. 
We would like to remark  that the study of the  fixed point properties  in VLSs  enables us to highlight,  yet again,    that the variable counterpart of the $L^p$-spaces  exhibits a much richer and heterogeneous structure giving rise to a variety of new problems and lines of research that do not occur for the classic Lebesgue spaces.

\section{Preliminaries and reflexivity in Variable Lebesgue spaces}

\begin{definition}\label{def1}
Let $ \mathcal{X} $ be an arbitrary vector space.
\begin{itemize}
\item[(a)]
A functional $ \rho:\mathcal{X}\rightarrow [0, \infty] $ is called a convex modular if for  $ x, y\in \mathcal{X} $:\\
$ (i)$ $\rho(x)=0$ if and only if $x=0$;\\
$(ii)$ $ \rho(\alpha x)=\rho(x) $ for every scalar $ \alpha $ with $ |\alpha|=1 $;\\
$(iii)$  $ \rho(\alpha x+\beta y)\leq \alpha\rho(x)+\beta\rho(y) $ if $ \alpha+\beta=1 $ and $ \alpha, \beta \geq 0 $.
\item[(b)]
A modular $ \rho $ defines a corresponding modular space, i.e. the vector space $ \mathcal{X}_\rho $ given by
$\{x\in \mathcal{X}: \rho(x/\lambda)<\infty  \text{ for some  } \lambda >0\}. $

\end{itemize}

\end{definition}
Given a vector space $\mathcal{X}$ with a convex modular $\rho$,
the formula
\begin{align*}
\|x\| = \inf\left\{\alpha>0: \rho\left(\frac{x}{\alpha}\right)\leq 1\right\} \mbox{ for } x\in \mathcal{X}_\rho,
\end{align*}
defines a norm which is frequently called the Luxemburg norm and $\mathcal{X}_\rho$ endowed with this norm is a Banach space.

\medskip

Throughout  this paper,
 $(\Omega,\Sigma,\mu)$ will be  a $\sigma$-finite  measure space. We will always assume that the measure is complete. Let  $p:\Omega\to [1,+\infty]$ be a measurable function and we consider the vector space $\mathcal{X}$ of all measurable functions $g:\Omega\to \mathbb{R}$.
Define the modular
\begin{equation}\label{mod}
\rho(g):=\int_{\Omega_{f}}|g(t)|^{p(t)} d\mu+ \displaystyle{\esssup_{ p^{-1}(\{+\infty\})}}|g(t)|,
\end{equation}
 where $\Omega_f:=\{t\in \Omega: p(t)<+\infty\}$. Alongside with $\Omega_f$ and $p^{-1}(\{+\infty\})$, we will  also distinguish the sets $p^{-1}(\{1\})$ and $\Omega^*=\Omega\setminus p^{-1}(\{1,+\infty\})$. 

 \medskip

The Variable Lebesgue Space (VLS)  $L^{p(\cdot)}(\Omega)$ is defined as the modular   space  endowed with the Luxemburg norm associated to the modular $\rho$ defined above.
It is well-known that $L^{p(\cdot)}(\Omega)$  is a  Banach function lattice whose    geometry  is strongly attached to the behaviour of the  exponent  function $p(\cdot)$. Note that  Lebesgue spaces  $L^p(\Omega)$ endowed with the standard $\V \cdot\V_p$ norm ($1\le p\le +\infty$) are particular examples of this construction just by considering the constant function $p(t)=p$ for all $t\in \Omega$.

\medskip

Following the usual notation,
 given a measurable set $E\subset \Omega$, we define
$$
 p_-(E):=\essinf_{t\in E } p(t), \quad p_+(E):=\esssup_{t\in E} p(t).
 $$
If $E=\Omega$ we just denote $p_-:=p_-(\Omega)$ and $p_+:=p_+(\Omega)$.

\medskip

 A modular space $\mathcal{X}_\rho$ is said to satisfy the $\Delta_2$-condition if there exists $M>0$ such that $\rho(2f)\leq M\rho(f)$ for every $f\in \mathcal{X}_\rho$. It is easy to prove that $L^{p(\cdot)}(\Omega)$ satisfies the $\Delta_2$-condition if $p_+(\Omega_f)<\infty$ (see \cite[Proposition 2.14] {cruz}). Moreover, in this case $\rho(g)<+\infty$ for every $g\in L^{p(\cdot)}(\Omega)$.

\medskip

A Banach lattice $X$ is said to have an order continuous norm if every monotone order bounded sequence  is convergent.  Using Lebesgue's Dominated Convergence  Theorem, it is not difficult to prove that the Luxemburg norm of a VLS is order continuous
whenever   $p_+(\Omega_f)<+\infty$ and $p^{-1}(\{\infty\})$ is the union of at most a null set and finitely many atoms. The following properties relating the modular and the Luxemburg norm in VLSs  will be used through this paper. 

\begin{lem}\label{properties} Let $(\Omega,\Sigma,\mu)$ be a $\sigma$-finite measure, $p:\Omega\to [1,+\infty]$ be an exponent function. 
\begin{itemize}

\item[$a)$]  If  $g\in L^{p(\cdot)}(\Omega)$, $g\ne 0$, then  $\rho\left(g\over \V g\V\right)\le 1$. Additionally,  $\V g\V\le \rho(g)$ when $\Vert g\Vert \ge 1$.

\item[$b)$]  Assume that $p_+(\Omega_f)<\infty$ and $g\in L^{p(\cdot)}(\Omega)$.  Then:

\begin{itemize}

\item[$b.i)$] If $a\ge 1$, $a\rho(g)\le \rho(ag)\le a^{p_+(\Omega_f)}\rho(g)$.

\item[$b.ii)$] If $0<a< 1$, $a^{p_+(\Omega_f)}\rho(f)\le \rho(af)\le a\rho(f)$.

\end{itemize}

\item[$c)$] Assume $p_+(\Omega_f)<\infty$ and $(g_n)$ is a sequence in $L^{p(\cdot)}(\Omega)$. Then:

\begin{itemize}

\item[$c.i)$]  $\lim_n \V g_n\V=1$ if and only if $\lim_n \rho(g_n)=1$.

\item[$c.ii)$]  $\lim_n \V g_n\V=0$ if and only if $\lim_n \rho(g_n)=0$.

\end{itemize}

\end{itemize}

\end{lem}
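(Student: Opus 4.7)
The plan is to establish each part by following the standard Luxemburg norm machinery, leveraging the convex-combination inequality $\rho(g/\lambda) \le \rho(g)/\lambda$ valid for $\lambda \ge 1$ (obtained from property (iii) of Definition \ref{def1} by writing $g/\lambda = (1/\lambda)g + (1-1/\lambda)\cdot 0$). For part (a), I would take $\alpha_n \downarrow \|g\|$ with $\rho(g/\alpha_n) \le 1$; the integrand $|g|^{p(t)}/\alpha_n^{p(t)}$ on $\Omega_f$ and the ess-sup piece $\esssup|g|/\alpha_n$ both increase monotonically to their $\alpha=\|g\|$ values, so the monotone convergence theorem yields $\rho(g/\|g\|) = \lim \rho(g/\alpha_n) \le 1$. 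For the second assertion of (a), when $\|g\| > 1$ the infimum definition directly forces $\rho(g) = \rho(g/1) > 1$; once $\rho(g) \ge 1$ is available one sets $\lambda = \rho(g)$ in the convex-combination estimate to obtain $\rho(g/\rho(g)) \le 1$, hence $\|g\| \le \rho(g)$.

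Part (b) should reduce to a pointwise computation. On $\Omega_f$, where $1 \le p(t) \le p_+(\Omega_f) < \infty$, for $a \ge 1$ one has $a \le a^{p(t)} \le a^{p_+(\Omega_f)}$; multiplying by $|g(t)|^{p(t)}$ and integrating controls the integral contribution, while the essential-supremum piece $\esssup|ag| = a\esssup|g|$ sits automatically between $a\esssup|g|$ and $a^{p_+(\Omega_f)}\esssup|g|$ (since $a \le a^{p_+(\Omega_f)}$). The case $0 < a < 1$ is symmetric with the reversed inequalities $a^{p_+(\Omega_f)} \le a^{p(t)} \le a$.

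Part (c) should then follow from (b). For (c.ii), if $\rho(g_n) \to 0$ then for every fixed $\lambda \ge 1$, (b.i) yields $\rho(\lambda g_n) \le \lambda^{p_+(\Omega_f)}\rho(g_n) \to 0$, so eventually $\rho(\lambda g_n) \le 1$, giving $\|g_n\| \le 1/\lambda$; letting $\lambda \to \infty$ produces $\|g_n\| \to 0$. Conversely, if $\|g_n\| \to 0$, eventually $\|\lambda g_n\| \le 1$ and part (a) yields $\rho(\lambda g_n) \le 1$, whence (b.i) gives $\rho(g_n) \le \rho(\lambda g_n)/\lambda \le 1/\lambda$. For (c.i) I would write $g_n = \|g_n\| h_n$ with $\|h_n\| = 1$ and apply (b) with $a = \|g_n\|$ to sandwich $\rho(g_n)$ between $\min(\|g_n\|, \|g_n\|^{p_+(\Omega_f)})\rho(h_n)$ and the corresponding maximum; together with the auxiliary identity $\rho(h_n)=1$ this forces $\rho(g_n) \to 1$ when $\|g_n\| \to 1$, and a contradiction argument handles the converse (part (a) rules out $\|g_n\| \ge 1+\varepsilon$, while (b.i) applied to a suitable $\lambda > 1$ rules out $\|g_n\| \le 1-\varepsilon$).

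The main technical nuisance I foresee is the side-fact $\|h\|=1 \Rightarrow \rho(h)=1$ needed for (c.i), which requires a continuity argument for $\alpha \mapsto \rho(h/\alpha)$ at $\alpha = 1$ and deserves careful bookkeeping when $p^{-1}(\{+\infty\})$ carries positive measure; the $\Delta_2$ condition guaranteed by $p_+(\Omega_f) < \infty$ is exactly what makes this continuity available, which is presumably why the hypothesis $p_+(\Omega_f)<\infty$ has been imposed in (b) and (c) but not in (a).
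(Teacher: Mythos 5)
Your proposal is correct and takes essentially the same route as the paper, whose proof consists of citing \cite[Proposition 2.21]{cruz} for part (a), deriving (b) from convexity and the definition of the modular, and deducing (c) from (a) and (b); you have simply filled in the details that the paper leaves to the reader, including the one genuinely delicate point (that $\|h\|=1$ forces $\rho(h)=1$ once $p_+(\Omega_f)<\infty$), which you correctly identify and correctly attribute to the continuity of $\alpha\mapsto\rho(h/\alpha)$. The only caveat is the borderline case $\|g\|=1$ in the second assertion of (a): your argument (like the cited reference) only yields $\|g\|\le\rho(g)$ when $\|g\|>1$, and in fact the inequality can fail at $\|g\|=1$ when $p_+(\Omega_f)=+\infty$ (the functions $f_n$ built in Theorem \ref{iso} satisfy $\|f_n\|=1$ but $\rho(f_n)=2^{-(n+1)}$), so this is an imprecision in the statement as written rather than a gap in your proof; note that wherever the paper later invokes this inequality it does so under $p_+(\Omega_f)<\infty$ or with $\|g\|>1$, where your argument applies.
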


\begin{proof}
Assertion $a)$ is proved in \cite[Proposition 2.21]{cruz}. Assertions $b.i)$  and $b.ii)$  are consequences of the convexity and the definition  of the modular and assertions $c.i)$ and $c.ii)$ can be easily proved using the inequalities in $a)$ and $b)$.

\end{proof}

When the measure space $(\Omega,\sigma,\mu)$ is purely atomic, the exponent function $p(\cdot)$ can be considered  as a sequence $(p_n)_n\subset [1,+\infty]$. The corresponding VLS is denoted by $\ell^{p_n}$ and they are  usually known in the literature as  a particular class of Musielaz-Orlicz sequence spaces or simply as  Nakano spaces  \cite{Na} (note that the modular considered in \cite{Na} is $m(x)=\sum_{n=1}^\infty {1\over p_n}|x_n|^{p_n} $ for $x=(x_n)$ when $(p_n)\subset[1,+\infty)$, while in the variable exponent setting   the modular is defined by $\rho(x)=\sum_{n=1}^\infty |x_n|^{p_n} $.
It  can easily be checked that both modular spaces
  give rise to isomorphic Banach spaces when $(p_n)$ is bounded).

\medskip

 When the norm fails to be order continuous, it is a general fact  in the theory of Banach function lattices the existence of an isomorphic copy of $\ell_\infty$
  \cite[Corollary 2.4.3]{MN} (see also \cite[Proposition 4.2]{LuPiPo}). In particular every nonreflexive  function lattice contains an isomorphic copy of $\ell_\infty$ \cite[Theorem 2.4.2]{MN}.  In the specific  case of the family of Musielak-Orlicz spaces, it was proved in \cite{He} (for nonatomic $\sigma$-finite measures) and in  \cite{Ka1} (for purely atomic measures) that, in absence of the $\Delta_2$-condition, there is an isometric copy of $\ell_\infty$ when the Luxemburg norm is considered. This stronger result is essential when is to be applied to the analysis of the fixed point property, since
having an isomorphic copy of $\ell_\infty$ does not exempt  a Banach space from satisfying the $w$-FPP (see \cite{tomas}).
Although  variable Lebesgue spaces lay  within the scope of the Musielak-Orlicz class, on the sake of completeness,
 we next include a proof of when such an isometric copy of $\ell_\infty$ can be found in the proper context of this article and including measures that may have both atomic and nonatomic parts.

\begin{theorem} \label{iso} Let $ (\Omega,\Sigma,\mu) $ be a $\sigma$-finite    measure space and $p:\Omega\to [1,+\infty)$ be a measurable function. If $p_+=+\infty$,
the Banach space $L^{p(\cdot)}(\Omega)$ contains an isometric copy of $\ell_\infty$. Consequently, under these assumptions, $L^{p(\cdot)}(\Omega)$ contains an isometric copy of every separable Banach space.
\end{theorem}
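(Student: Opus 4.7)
The plan is to construct a sequence $(f_n)_{n\ge 1}$ of pairwise disjointly supported functions in $L^{p(\cdot)}(\Omega)$ satisfying
\[
\Vert f_n\Vert = 1,\qquad \rho(f_n)\le 2^{-n},\qquad \rho(f_n/\lambda)=+\infty\ \text{for every}\ \lambda\in(0,1).
\]
The last two properties already force the first: the modular bound gives $\Vert f_n\Vert\le 1$ via Lemma~\ref{properties}(a), while the blow-up at scales $\lambda<1$ gives $\Vert f_n\Vert\ge 1$ directly from the definition of the Luxemburg norm. With such $(f_n)$ at hand, I would define $T\colon\ell_\infty\to L^{p(\cdot)}(\Omega)$ by $T((a_n)):=\sum_n a_nf_n$ (the series is well-defined pointwise thanks to the disjointness of supports) and show it is an isometric embedding.

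Verifying the isometry from the three properties is routine. The lower bound $\Vert T((a_n))\Vert\ge\sup_n|a_n|$ follows from the lattice structure: $|a_kf_k|\le|\sum_n a_nf_n|$ pointwise implies $|a_k|=\Vert a_kf_k\Vert\le\Vert T((a_n))\Vert$. For the upper bound, letting $M:=\Vert(a_n)\Vert_\infty$ and using $|a_n/M|\le 1$ together with $p\ge 1$, one has $|a_nf_n/M|^{p(t)}\le|f_n|^{p(t)}$; additivity of the modular over disjoint supports then gives
\[
\rho\bigl(T((a_n))/M\bigr)=\sum_n\rho(a_nf_n/M)\le\sum_n\rho(f_n)\le\sum_n 2^{-n}=1,
\]
so $\Vert T((a_n))\Vert\le M$ by Lemma~\ref{properties}(a).

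The heart of the argument is the construction of $(f_n)$, exploiting $p_+=+\infty$ together with $\sigma$-finiteness. I would extract pairwise disjoint measurable subsets $\{A_{n,k}:n,k\ge 1\}$ with $A_{n,k}\subset\{t:p(t)\ge k\}$ and $0<\mu(A_{n,k})<+\infty$ (by nonatomic splitting where $\mu$ is nonatomic and by selection of distinct atoms where it is purely atomic, combining the two in the mixed case). Setting $f_n:=\sum_k\alpha_{n,k}\chi_{A_{n,k}}$, I would choose the constants $\alpha_{n,k}>0$ and the sizes $\mu(A_{n,k})$ so that the partial modulars $\beta_{n,k}:=\int_{A_{n,k}}\alpha_{n,k}^{p(t)}\,d\mu$ satisfy
\[
\sum_k\beta_{n,k}\le 2^{-n}\qquad\text{and}\qquad \sum_k c^k\beta_{n,k}=+\infty\ \text{for every}\ c>1.
\]
The prescription $\beta_{n,k}=2^{-n}/(Ck^2)$ with $C=\sum_j j^{-2}$ does the job since $c^k/k^2\to+\infty$. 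This produces $\rho(f_n)=\sum_k\beta_{n,k}\le 2^{-n}$ and $\rho(f_n/\lambda)\ge\sum_k(1/\lambda)^k\beta_{n,k}=+\infty$ for every $\lambda\in(0,1)$.

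The main technical obstacle is realizing the desired $\beta_{n,k}$ within the measure-theoretic constraints: in the nonatomic case one has full freedom in shrinking $\mu(A_{n,k})$, whereas in the purely atomic case atoms have fixed sizes and the compensation must come through $\alpha_{n,k}$ alone (noting that $\alpha_{n,k}^{p_j}=\beta_{n,k}$ with $p_j\to+\infty$ automatically forces $\alpha_{n,k}\to 1$ from below, which guarantees $\alpha_{n,k}/\lambda>1$ eventually for any $\lambda<1$). This delicate balancing is the quantitative expression of the failure of the $\Delta_2$-condition ensured by $p_+=+\infty$. The final assertion, that every separable Banach space embeds isometrically into $L^{p(\cdot)}(\Omega)$, is immediate from the now-established inclusion $\ell_\infty\hookrightarrow L^{p(\cdot)}(\Omega)$ combined with the classical embedding $X\hookrightarrow\ell_\infty$ given by evaluation at a countable weak-$\ast$ dense family in the unit ball of $X^\ast$.
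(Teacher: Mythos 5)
Your overall strategy is exactly the paper's: build disjointly supported unit vectors $f_n$ with $\sum_n\rho(f_n)\le 1$ whose modulars blow up under every dilation by a factor greater than $1$, and then use disjointness plus the lattice property of the Luxemburg norm to get the isometric copy of $\ell_\infty$. Your verification of the isometry (lower bound from $|a_kf_k|\le|\sum_n a_nf_n|$, upper bound from additivity of the modular over disjoint supports) is correct and replaces the paper's citation of Hudzik's theorem; the blow-up estimate $\sum_k c^k\beta_{n,k}=+\infty$ is the same mechanism as the paper's $(1+\tfrac1j)^{p_j}>2^j$. The one substantive difference is the shape of the building blocks, and it is where your write-up has a soft spot. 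You take \emph{constant} heights $\alpha_{n,k}$ on $A_{n,k}$ and need to solve $\int_{A_{n,k}}\alpha^{p(t)}\,d\mu=\beta_{n,k}$ for $\alpha$. If $\mu(A_{n,k})\ge\beta_{n,k}$ this is fine by the intermediate value theorem on $\alpha\in(0,1]$, but when $\mu(\{p\ge k\})$ is very small you are forced into $\alpha>1$, and there the map $\alpha\mapsto\int_A\alpha^{p(t)}\,d\mu$ need not be surjective onto $(\mu(A),\infty)$ when $p$ is unbounded on $A$: it can be finite and bounded on an interval $[1,\alpha_0)$ and identically $+\infty$ beyond, skipping your target. (Your remark that shrinking $\mu(A_{n,k})$ gives freedom actually points the wrong way: shrinking pushes you further into the $\alpha>1$ regime.) The fix is easy and is built into the paper's construction: first choose a sequence $q_1<q_2<\cdots\to\infty$ with $\mu(p^{-1}([q_j,q_{j+1})))>0$ and take each $A_{n,k}$ inside a single slab $p^{-1}([q_j,q_{j+1}))$ with $q_j\ge k$ (the paper's prime-power indexing $S_{r_n^j}$ is just a bookkeeping device for distributing disjoint slabs among the pairs $(n,k)$); on such a slab $p$ is bounded, so your IVT argument always succeeds. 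Alternatively, the paper sidesteps the solvability question entirely by using the $p$-adapted heights $x_{n,j}^{1/p(t)}$, for which $\int_{A}\bigl(x^{1/p(t)}\bigr)^{p(t)}d\mu=x\,\mu(A)$ gives any prescribed partial modular by a trivial choice of $x$. With either repair your argument is complete; the final reduction of the separable-universality claim to $\ell_\infty$ is standard and correct.
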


\begin{proof}

Since $p_+=+\infty$, either there exists a   sequence of atoms $\{m_n\}$ such that $p(m_n)\to+ \infty$ or there exists $M$ such that the set $p^{-1} ((M,+\infty))$ does not contain any atom. In any case,  we can find a real sequence $\{p_n\}\uparrow +\infty$ such that $\mu(p^{-1}([p_n,p_{n+1})))>0$ and $(1+\frac{1}{n})^{p_n}>2^n$. Let $S_n\subset p^{-1}([p_n,p_{n+1}))$ such that $0<\mu(S_n)<+\infty$.  Hence $S_n\cap S_m=\emptyset$ if $n\ne m$.  Denote by $\{r_n\}$ the increasing sequence formed by all prime numbers greater than 1. Note that if $t\in S_{r_n^j}$ then $p(t)\ge p_{r_n^j}\ge p_j$ for all $n,j\in\mathbb{N}$. For every $n\in\mathbb{N}$ we define the function
$$
f_n(t):=\displaystyle{\sum_{j=1}^\infty x_{n,j}^{1/p(t)} \chi_{S_{r_n^j}}(t)    },\ \  \mbox{where }\  x_{n,j}= \frac{1}{2^{n+1+j}\mu(S_{r_n^j})}\ \forall j\in\mathbb{N}.
$$
By construction $\rho(f_n)= {1\over 2^{n+1}}$ which implies that
 $\V f_n\V\le 1$ for all $n\in\mathbb{N}$.  Let $\lambda>1$ and
choose $j_0$ such that $1+\frac{1}{j}<\lambda$ for $j\geq j_0$.  We have: 

\begin{eqnarray*}
 \rho(\lambda f_n)&=& \sum_{j=1}^\infty \int_{S_{r_n^j}} x_{n,j} \lambda^{p(t)} d\mu   \ge  \sum_{j=j_0}^\infty \int_{S_{r_n^j}} x_{n,j}\left(1+\frac{1}{j}\right)^{p_j} d\mu  \\&\geq&  \sum_{j=j_0}^\infty  x_{n,j}2^{j}\mu(S_{r_n^j})= \sum_{j=j_0}^\infty 2^{j} \frac{1}{2^{n+1+j}}=+\infty.\end{eqnarray*}

The previous arguments prove that $\V f_n\V=1$ for every $n\in\mathbb{N}$. Likewise, it can be checked that $\V \sum_{n=1}^\infty f_n\V=1$. At this stage, it is not difficult to conclude that the sequence $(f_n)$ spans  an isometric copy of $\ell_\infty$ in $L^{p(\cdot)}(\Omega)$ (see for instance \cite[Theorem 1]{H}).
The statement of the theorem is complete due to the fact that  $\ell_\infty$ contains an isometric copy of every separable Banach space \cite[Theorem 2.5.7]{A}.
\end{proof}

\medskip

A complete analysis of the reflexivity condition  for variable Lebesgue spaces 
was  studied  for   nonatomic  measures in \cite{LuPiPo} and  for   purely atomic measures in \cite{sun}.  In fact, for the nonatomic case  the following characterization was obtained:

\begin{theorem}  \label{LuPiPo} \cite[Theorem 3.3]{LuPiPo} Let $ (\Omega,\Sigma,\mu) $ be a $ \sigma $-finite  nonatomic measure space. The following conditions are all equivalent:
\item[$(a)$] $1<p_-\leq p_+<\infty.$
\item[$(b)$]  $L^{p(\cdot)}(\Omega)$ is uniformly convex.
\item[$(c)$] $L^{p(\cdot)}(\Omega)$ is reflexive.

 \end{theorem}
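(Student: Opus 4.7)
The plan is to prove the cycle (b)$\Rightarrow$(c)$\Rightarrow$(a)$\Rightarrow$(b). The first implication is immediate from the Milman--Pettis theorem, which asserts that every uniformly convex Banach space is reflexive, so no work is needed there.

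For (c)$\Rightarrow$(a), I would argue by contrapositive, ruling out each possible failure of (a). If $p_+=+\infty$, Theorem \ref{iso} produces an isometric copy of $\ell_\infty$ inside $L^{p(\cdot)}(\Omega)$, and since $\ell_\infty$ is not reflexive (it contains $c_0$) this forces $L^{p(\cdot)}(\Omega)$ to be non-reflexive. If instead $p_-=1$, I would use the nonatomicity of $\mu$ to build an isomorphic copy of $\ell_1$. Concretely, for each $n$ the set $A_n:=\{t:1\le p(t)\le 1+\tfrac1n\}$ has positive measure (otherwise $p_->1$), and by nonatomicity I can extract inside $A_n$ a set $E_n$ of positive finite measure, with the $E_n$ pairwise disjoint. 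Setting $f_n:=\chi_{E_n}/\|\chi_{E_n}\|$, the pairwise disjointness together with $p(t)\to 1$ on the supports gives that for any finite linear combination $\sum a_k f_k$ the Luxemburg norm satisfies $\sum|a_k|\ge\|\sum a_kf_k\|\ge(1-\varepsilon_n)\sum|a_k|$ with $\varepsilon_n\to 0$; thus the closed span contains an asymptotically isometric (hence in particular isomorphic) copy of $\ell_1$, contradicting reflexivity.

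For (a)$\Rightarrow$(b), the strategy is the standard route via modular uniform convexity. First I would establish a Clarkson-type estimate at the modular level: there is a function $\delta_0:(0,1]\to(0,1]$ depending only on $p_-,p_+$ such that whenever $\rho(f),\rho(g)\le 1$ and $\rho\bigl(\tfrac{f-g}{2}\bigr)\ge\varepsilon$, one has $\rho\bigl(\tfrac{f+g}{2}\bigr)\le 1-\delta_0(\varepsilon)$. The pointwise input is the elementary inequality $\bigl|\tfrac{a+b}{2}\bigr|^{p}+c_{p}\bigl|\tfrac{a-b}{2}\bigr|^{p}\le\tfrac{|a|^p+|b|^p}{2}$ valid for $p\ge 2$ (with a constant $c_p>0$) and its companion for $1<p\le 2$; integrating against $p(t)$ with $p_-,p_+$ finite and $p_->1$ yields the modular statement after splitting $\Omega$ into $\{p(t)\le 2\}$ and $\{p(t)>2\}$. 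Second, using Lemma \ref{properties}, I would transfer this to norm uniform convexity: given $\|f\|,\|g\|\le 1$ and $\|f-g\|\ge\varepsilon$, parts (a) and (c) of that lemma provide quantitative control $\rho(f),\rho(g)\le 1$ and a lower bound $\rho((f-g)/2)\ge\eta(\varepsilon)>0$; the modular inequality then gives $\rho((f+g)/2)\le 1-\delta_0(\eta(\varepsilon))$, and a final application of Lemma \ref{properties} translates this into $\|(f+g)/2\|\le 1-\delta(\varepsilon)$ for a suitable $\delta(\varepsilon)>0$.

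The main obstacle is the Clarkson-type modular inequality with constants uniform over the exponent range $[p_-,p_+]$: one must keep track of how the classical constants $c_p$ in Clarkson's inequalities degenerate as $p\to 1^+$ or $p\to+\infty$, and it is precisely the assumption $1<p_-\le p_+<\infty$ that prevents such degeneration and makes the bound $\delta_0(\varepsilon)$ strictly positive. Once this uniform Clarkson estimate is in hand, the rest of the argument is essentially bookkeeping between modular and Luxemburg norm via Lemma \ref{properties}.
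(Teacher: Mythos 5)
First, a point of reference: the paper does not prove this theorem at all --- it is quoted verbatim from \cite[Theorem 3.3]{LuPiPo}, and the authors only remark afterwards which implication uses nonatomicity. So there is no in-paper proof to compare against; your proposal has to be judged on its own, and against the paper's later results. Your implication $(b)\Rightarrow(c)$ is fine, and using Theorem \ref{iso} for the case $p_+=+\infty$ of $(c)\Rightarrow(a)$ is legitimate since that theorem precedes this one and does not depend on it.

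The genuine gap is in your treatment of the case $p_-=1$ with $\mu(p^{-1}(\{1\}))=0$. You claim that the disjointly supported normalized functions $f_n$ with $p(t)\le 1+\tfrac1n$ on $\mathrm{supp}(f_n)$ satisfy $\bigl\Vert\sum a_kf_k\bigr\Vert\ge(1-\varepsilon_n)\sum|a_k|$ and hence span an asymptotically isometric copy of $\ell_1$. This is provably false: it contradicts the paper's Theorem \ref{last} (a VLS contains an a.i.c.\ of $\ell_1$ if and only if it contains an isometric copy of $\ell_1$, which a space such as $L^{1+x}([0,1])$ does not) and Corollary \ref{fpp2} (the spaces $\ell^{p_n}$ with $p_n\downarrow 1$ have the FPP, hence contain no a.i.c.\ of $\ell_1$); indeed the authors explicitly flag that the claim ``$\ell^{p_n}$ with $p_n\to1$ contains an a.i.c.\ of $\ell_1$'' from \cite{Z} is a known error. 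The estimate breaks down because for a coefficient with $|a_k|/s$ small one only gets $\int_{E_k}(|a_k|/s)^{p(t)}|f_k|^{p(t)}\,d\mu\ge(|a_k|/s)^{1+1/k}=(|a_k|/s)\cdot(|a_k|/s)^{1/k}$, and the extra factor $(|a_k|/s)^{1/k}$ is not of the form $1-\varepsilon_k$ uniformly in the coefficients. The conclusion you want (an isomorphic copy of $\ell_1$, hence nonreflexivity) is still true, but it requires a genuinely different and more careful argument; the paper's own route for the analogous step in Theorem \ref{refle} is to invoke Theorem \ref{nic}, showing the norm on the span is sequentially separating, whence the subspace is hereditarily $\ell_1$ by \cite[Corollary 7.7]{ja}. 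Alternatively, a uniform lower bound $c\sum|a_k|$ for some fixed $c>0$ can be extracted with the rate $p_+(E_k)\le 1+\tfrac1k$, but that too needs a splitting of the index set according to the size of $|a_k|$ rather than a per-coordinate $(1-\varepsilon_k)$ factor. A secondary, smaller issue: in $(a)\Rightarrow(b)$ the pointwise inequality $\bigl|\tfrac{a+b}{2}\bigr|^{p}+c_{p}\bigl|\tfrac{a-b}{2}\bigr|^{p}\le\tfrac{|a|^p+|b|^p}{2}$ is simply false for $1<p<2$ (test $a=1+\epsilon$, $b=1-\epsilon$); the low-exponent range requires Hanner-type or Clarkson's second inequality, which do not integrate against a variable $p(t)$ in the direct way you describe, so the ``main obstacle'' you identify is more than bookkeeping.
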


The nonatomic assumption in Theorem \ref{LuPiPo} is used by the authors exclusively in the proof of \lq\lq $(c)$ implies $(a)$". The proof of \lq\lq $(a)$ implies $(b)$" holds for every $\sigma$-finite measure space.
Actually, Theorem \ref{LuPiPo}  does not  entirely hold
when the measure contains atoms,  since  reflexivity can be obtained in absence of uniform convexity:  consider the purely atomic case $\ell^{p_n}$ for
  $p_1=p_2=1$ and $p_n=2$ for $n>2$. The VLS space $\ell^{p_n}$ fails to be uniformly convex, since it contains $\ell_1(2)$ isometrically, but it is reflexive since it is isomorphic to $\ell_2$. As reflexivity will be at the core of many of our next results,   we first aim  to achieve  a  complete characterization of reflexivity  for VLSs    including all $\sigma$-finite measures:

\begin{theorem} \label{refle} Let $ (\Omega,\Sigma,\mu) $ be an arbitrary $\sigma$-finite   measure space and let $p:\Omega\to [1,+\infty]$ be a measurable function. The following conditions are equivalent:

\begin{itemize}

\item[$i)$] $L^{p(\cdot)}(\Omega)$ is reflexive.

\item[$ii)$] $L^{p(\cdot)}(\Omega)$ contains no isomorphic copy of $\ell_1$.

\item[$iii)$]  Let $\Omega^*:=\Omega\setminus p^{-1}(\{1,+\infty\})$. Then $1<p_-(\Omega^*)\le p_+(\Omega^*)<+\infty$ and $p^{-1}(\{1,+\infty\})$ is essentially formed  by finitely many atoms at most.

\end{itemize}
\end{theorem}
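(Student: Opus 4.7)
The plan is to establish the chain of implications $(i)\Rightarrow(ii)\Rightarrow(iii)\Rightarrow(i)$. The implication $(i)\Rightarrow(ii)$ is immediate, since $\ell_1$ is nonreflexive and a closed subspace of a reflexive space must be reflexive.

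For $(iii)\Rightarrow(i)$, I will use the band decomposition
\[
L^{p(\cdot)}(\Omega) \cong L^{p(\cdot)}(\Omega^*) \oplus L^{1}(p^{-1}(\{1\})) \oplus L^{\infty}(p^{-1}(\{+\infty\}))
\]
induced by the partition of $\Omega$; on each summand the Luxemburg norm reduces to the corresponding classical Lebesgue norm (for the constant values $p=1,+\infty$) or to the variable norm on $\Omega^*$. Condition $(iii)$ makes the first and third summands finite-dimensional, while on $\Omega^*$ one has $1 < p_-(\Omega^*) \le p_+(\Omega^*) < \infty$, so the implication $(a)\Rightarrow(b)$ of Theorem \ref{LuPiPo} (which, as observed immediately after that theorem, is valid for arbitrary $\sigma$-finite measures) yields uniform convexity and hence reflexivity of $L^{p(\cdot)}(\Omega^*)$. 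A finite direct sum of reflexive spaces is reflexive.

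For $(ii)\Rightarrow(iii)$, I argue by contrapositive, embedding $\ell_1$ into $L^{p(\cdot)}(\Omega)$ in each of the three (non-exclusive) ways in which $(iii)$ can fail. If $p^{-1}(\{1,+\infty\})$ is not essentially a finite union of atoms, then either $p\equiv 1$ on an infinite-dimensional band (isometric to an infinite-dimensional $L^{1}$-space, which contains $\ell_1$) or $p\equiv +\infty$ on an infinite-dimensional band (isometric to an infinite-dimensional $L^{\infty}$-space, which also contains $\ell_1$ by the final remark of Theorem \ref{iso}). If $p_+(\Omega^*)=+\infty$, then since $p$ is finite on $\Omega^*$ by definition, Theorem \ref{iso} applied to $L^{p(\cdot)}(\Omega^*)$ embeds $\ell_\infty$, and hence $\ell_1$, into $L^{p(\cdot)}(\Omega)$.

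The remaining and delicate case is $p_+(\Omega^*) < \infty$ together with $p_-(\Omega^*) = 1$. Here I plan a direct construction: choose pairwise disjoint measurable sets $F_n \subset \Omega^*$ of finite positive measure with $p_+(F_n)\downarrow 1$, and set $f_n := \chi_{F_n}/\|\chi_{F_n}\|$. The triangle inequality gives the upper $\ell_1$-estimate $\|\sum a_j f_j\| \le \sum |a_j|$; the work is in the reverse bound $\|\sum a_j f_j\| \ge K\sum|a_j|$ for a constant $K>0$ independent of the scalars. Exploiting disjointness of supports, the modular splits additively as $\rho(\sum a_j f_j) = \sum_j \rho(a_j f_j)$, and on each $F_j$ the exponent $p(t)$ lies in a narrow interval just above $1$, so $\rho$ is essentially an $L^{1}$-integral there. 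Combining this with the convexity bounds $b.i)$--$b.ii)$ and the modular-to-norm conversion $c.i)$--$c.ii)$ of Lemma \ref{properties}, a sufficiently fast decay of $p_+(F_n)$ to $1$ should produce the required uniform lower bound. This last case is the main obstacle, since the nonlinear infimum defining the Luxemburg norm prevents a direct transfer of $L^{1}$-identities to the variable exponent setting and forces the parameters of the construction to be tuned carefully.
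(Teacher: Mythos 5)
Your architecture is correct and, at two points, genuinely diverges from the paper's proof, so it is worth recording what each route buys. For $(iii)\Rightarrow(i)$ the paper does not use a direct sum: it replaces $p$ by $\tilde p$, equal to $2$ on the finitely many exceptional atoms and to $p$ elsewhere, verifies by hand that the two Luxemburg norms are equivalent, and concludes reflexivity from uniform convexity of $L^{\tilde p(\cdot)}(\Omega)$; your band decomposition with two finite\-/dimensional summands reaches the same conclusion and is arguably cleaner, provided you note that for disjointly supported bands the Luxemburg norm is equivalent to the maximum of the restricted norms. The substantial divergence is in the delicate case $p_+(\Omega^*)<\infty$, $p_-(\Omega^*)=1$ of $(ii)\Rightarrow(iii)$: the paper does not construct a copy of $\ell_1$ directly but forward\-/references Theorem \ref{nic}, producing a subspace whose norm is sequentially separating and invoking the machinery of \cite{ja} to conclude it is hereditarily $\ell_1$. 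Your direct construction is more elementary and self\-/contained, and it does work, but the ``tuning'' you defer is genuinely necessary and has a precise form: it is \emph{not} enough that $p_+(F_n)\downarrow 1$ (slowly decaying exponents yield Nakano spaces strictly larger than $\ell_1$, whose unit vector basis is therefore not equivalent to the $\ell_1$ basis); you must choose the $F_n$ with $\sum_n\delta_n<\infty$, where $\delta_n:=p_+(F_n)-1$, which the hypotheses $p>1$ on $\Omega^*$ and $p_-(\Omega^*)=1$ always permit. With that choice the estimate closes in two lines: normalizing $\sum_n a_n=1$ with $a_n\ge 0$, using $\int_{F_n}\Vert\chi_{F_n}\Vert^{-p(t)}\,d\mu=1$ and the inequality $u^{1+\delta}\ge u-\delta/e$ for $u\in(0,1]$, one gets for $0<\lambda<1/2$ and $a_n\le\lambda$ for all $n$
\begin{equation*}
\rho\Bigl(\lambda^{-1}\sum_n a_nf_n\Bigr)\;\ge\;\sum_n\bigl(a_n/\lambda\bigr)^{1+\delta_n}\;\ge\;\frac1\lambda-\frac1e\sum_n\delta_n\;>\;1,
\end{equation*}
while if some $a_{n_0}>\lambda$ that single term already forces the modular above $1$; hence $\bigl\Vert\sum_n a_nf_n\bigr\Vert\ge\tfrac12\sum_n|a_n|$ once $\sum_n\delta_n\le 1/2$. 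So your plan can be completed as stated and yields an isomorphic copy of $\ell_1$ spanned by disjoint normalized indicators, where the paper instead relies on the near\-/infinity\-/concentrated\-/norm results of Section 4.
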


\begin{proof}
$i)$ implies $ii)$ is straightforward.  Let us prove $ii)$ implies $iii)$:
 If $p^{-1}(\{1,+\infty\})$ contains infinitely many atoms, either $\ell_1$ or $\ell_\infty$ would be isometrically embedded in $L^{p(\cdot)}(\Omega)$. In any case,  $L^{p(\cdot)}(\Omega)$  would contain an isometric copy of $\ell_1$.  If $p^{-1}(\{1,+\infty\})$ contains a nonatomic set with positive measure we would arrive at the same conclusion, since either $L^1[0,1]$ or $L^\infty([0,1])$ would be isometrically embedded into $L^{p(\cdot)}(\Omega)$. If  $p_+(\Omega^*)=+\infty$,   we would obtain  an isometric copy of $\ell_1$ in $L^{p(\cdot)}(\Omega^*)$ in view of  Theorem \ref{iso} and, obviously, in $L^{p(\cdot)}(\Omega)$.  Finally, if $p_-(\Omega^*)=1$, we will later prove in Theorem \ref{nic} that it is possible to find a subspace within $L^{p(\cdot)}(\Omega)$   which is hereditarely    $\ell_1$ (and therefore, it contains $\ell_1$ isomorphically).

 Let us prove $iii)$ implies $i)$: We split  $\Omega=\Omega_a\cup \Omega_b$;  $\Omega_a$, $\Omega_b$  being the purely  atomic and the  nonatomic part of $\Omega$ respectively. From  the assumptions,  we have that $1<p_-(\Omega_b)\le p_+(\Omega_b)<+\infty$  and from Theorem \ref{LuPiPo},  we know that the variable Lebesgue space $L^{p(\cdot)}(\Omega^*)$ is uniformly convex. From $iii)$ we also know that there are some  integers $0\le r_1\le r_2$ such that   $\Omega_a\cap p^{-1}(\{1\})=\{t_1,\cdots,t_{r_1}\}$ and $\Omega_a\cap p^{-1}(\{+\infty\})=\{t_{r_1+1},\cdots, t_{r_2}\}$. Set $r:=r_2-r_1\ge 0$. Thus we can write
 $$
 \rho(g)= \int_{\Omega_b} \vert g(t)\vert^{p(t)}dt+ \sum_{i=1}^{r_1}|g(t_i)|+\sup_{r_1< i\le r_2}|g(t_i)|\quad \forall g\in L^{p(\cdot)}(\Omega).
 $$

  We aim to prove that $L^{p(\cdot)}(\Omega)$ can be renormed to be uniformly convex and therefore it is reflexive. In order to do that,
 we define the measurable function $\tilde{p}: \Omega\to (1,+\infty)$ given by $\tilde{p}(t)=2$ if $t\in \Omega_a\cap p^{-1}(\{1,+\infty\})$  and $\tilde{p}(t)=p(t)$ otherwise. We denote by $\V \cdot\V_{\tilde p}$ the Luxemburg norm rising from  the modular
  $$
 \tilde \rho (g)=
\int_\Omega \vert g(t)\vert ^{\tilde{p}(t)}dt=\int_{\Omega_b} \vert g(t)\vert^{p(t)}dt +\sum_{i=1}^{r_2}|g(t_i)|^2.
$$

  We next check that  ${1\over r_1+2}\V g\V\le \V g\V_{\tilde{p}}\le \max\{1,r\} \V g\V$ for all   $g\in L^{p(\cdot)}(\Omega)$:
Assume that $\V g\V=1$. From Lemma \ref{properties}.a) we know that $\rho(g)\le 1$, which in particular implies  that $|g(t_i)|\le 1$ for all $1\le i\le r_2$.
 This gives  $\tilde\rho(g)\le  \max\{1,r\} \rho(g)\le \max\{1,r\} $ and  $\tilde\rho\left({g\over \max\{1,r\} }\right)\le {1\over \max\{1,r\} }\tilde\rho(g)\le 1$ yielding to  $\V g\V_{\tilde{p}}\le \max\{1,r\} \V g\V$ for all $g\in L^{p(\cdot)}(\Omega)$. Assume now that $\V g\V_{\tilde p}=1$ and therefore $\tilde\rho(g)\le 1$ and $|g(t_i)|\le 1$ for all $1\le i\le r_2$. Hence, $\rho(g)\le \tilde\rho(g)+ r_1+1\le r_1+2$. By convexity, we have  $\rho\left({g\over r_1+2}\right)\le {1\over r_1+2}\rho(g)\le 1$ and $\V g\V\le r_1+2$.

Hence, we have obtained that $L^{p(\cdot)}(\Omega)$ is isomorphic to $L^{\tilde p(\cdot)}(\Omega)$ which is in turn  uniformly convex, since $1<\tilde p_-\le \tilde p_+<+\infty$  (see remark after Theorem \ref{LuPiPo}) and this concludes the proof.

\end{proof}

Finally, we would like to recall  that some fixed point results   have already appeared  for VLSs when they are considered modular spaces and focusing the notion of nonexpansivity with respect to the modular $\rho(\cdot)$ defined by (\ref{mod}) \cite{BBK, BKB, BMB}. In the next sections our goal is completely different and  addresses toward the analysis of the fixed point property when nonexpansiveness is measured  with respect to the Luxemburg norm and the potential connections  linking the  geometry and  reflexivity of the underlying variable space.

\section{Weak Fixed Point Property in Variable Lebesgue Spaces}

In this section we will obtain a characterization of  the $w$-FPP in variable Lebesgue spaces in terms of the variable exponent function $p(\cdot)$ and whether or not  $L_1[0,1]$ can be isometrically embedded in  $L^{p(\cdot)}(\Omega)$. In particular, 
we will exhibit  that there are some VLSs with the $w$-FPP which are not reflexive,  in sharp contrast to the classic  $L^p$-spaces, where $L^p(\Omega)$ has the $w$-FPP if and only if $L^p(\Omega)$ is reflexive.

\medskip

We will start with two
 technical lemmas, the first of which is just a measure theory result likely well-known. We include the proof for the sake of completeness.

\begin{lem}\label{primero} Let $(\Omega,\Sigma,\mu)$ be a $\sigma$-finite measure space and $(f_n)$ be a bounded sequence in $L_1(\mu)$.Then the following statement holds: For almost every $t\in \Omega$, the scalar sequence $\{f_n(t)\}_n$ has at least one finite accumulation point. 
\end{lem}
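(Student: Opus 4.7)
The plan is to reduce the claim to a straightforward application of Fatou's lemma applied to the sequence of nonnegative functions $|f_n|$, and then to extract a convergent subsequence at each good point via Bolzano--Weierstrass.

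First, I would set $M := \sup_n \|f_n\|_1 < \infty$ (finite by hypothesis) and define the measurable function
$$
g(t) := \liminf_{n\to\infty} |f_n(t)|, \qquad t\in\Omega.
$$
Fatou's lemma (which holds verbatim in any $\sigma$-finite measure space, so the $\sigma$-finiteness hypothesis causes no trouble) then yields
$$
\int_\Omega g(t)\, d\mu(t) \;\le\; \liminf_{n\to\infty} \int_\Omega |f_n(t)|\, d\mu(t) \;\le\; M \;<\; +\infty.
$$
Consequently $g(t) < +\infty$ for $\mu$-a.e.\ $t \in \Omega$. Let $E$ denote the full-measure set where this holds.

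For each $t \in E$, the condition $\liminf_n |f_n(t)| < +\infty$ means there exists a subsequence $\{n_k\} = \{n_k(t)\}$, depending on $t$, such that $\{|f_{n_k}(t)|\}_k$ is bounded in $\mathbb R$. By Bolzano--Weierstrass applied to the bounded real sequence $\{f_{n_k}(t)\}_k$, we can extract a further subsequence converging to some finite real number, which is a finite accumulation point of $\{f_n(t)\}_n$. The only mild subtlety is that the extracted subsequence depends on $t$, but this is irrelevant to the pointwise conclusion sought in the statement; no measurability or uniformity in $t$ is required. The argument is short and there is no substantive obstacle.
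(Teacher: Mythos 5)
Your proof is correct, and it takes a genuinely different (and more economical) route than the paper's. The paper first reduces to a finite measure space, isolates the bad set $I=\{t\in\Omega:\lim_n|f_n(t)|=+\infty\}$ (which is exactly the set where no finite accumulation point exists), and shows $\mu(I)=0$ by a two-sided estimate along a subsequence: dominated convergence applied to the indicator functions $\chi_{\{t\in I:\,|f_n(t)|<a\}}$ (this is where finiteness of the measure is used) produces $(n_k)$ with $\mu(\{t\in I:|f_{n_k}(t)|<2^k\})\le 2^{-k}$, while Chebyshev's inequality gives $\mu(\{t\in I:|f_{n_k}(t)|\ge 2^k\})\le M2^{-k}$; adding and letting $k\to\infty$ yields $\mu(I)=0$. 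Your argument collapses all of this into a single application of Fatou's lemma: $\int_\Omega\liminf_n|f_n|\,d\mu\le M$ forces $\liminf_n|f_n(t)|<\infty$ almost everywhere, which is precisely the complement of $I$, and Bolzano--Weierstrass applied to a bounded subsequence at each such $t$ finishes the proof (the $t$-dependence of the subsequence is indeed harmless, since the conclusion is purely pointwise). What your route buys is brevity and slightly greater generality: Fatou holds on arbitrary measure spaces, so you need neither the reduction to finite measure nor, strictly speaking, the $\sigma$-finiteness hypothesis, whereas the paper's dominated-convergence step does require an integrable dominating function. Both arguments are elementary and establish the same full-measure set of good points.
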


\begin{proof}
Since the measure is $\sigma$-finite we can assume that $\Omega=\displaystyle{\cup_{s=1}^\infty\Omega_s}$ with $\mu(\Omega_s)<\infty$. If the previous statement is proved for finite measures it follows for $\sigma$-finite measures.  Hence, without loss of generality, we can assume that the measure is finite. 
 Let $I:=\{t\in \Omega: \lim_n |f_n(t)|=+\infty\}$. It is clear the scalar sequence  $\{f_n(t)\}$ has an accumulation point if and only if $t\in \Omega\setminus I$. We will prove that $\mu(I)=0$:
 
Fix a real number $a>0$.  For $n\in\mathbb{N}$, set $A_n(a)=\{t\in I:  |f_n(t)|<a\}\subset I$. Since the sequence  $\{\chi_{A_n(a)}\}_n$ converges to zero pointwise  and
 $ \chi_{A_n(a)}(t)\le  a\chi_\Omega(t)$ for all $t\in\Omega$, using Lebesgue's Dominated Convergence Theorem, we have   $\lim_n \mu(A_n(a))=0$.
For every $k\in\mathbb{N}$,  choosing $a=2^k$ and repeating the process sucessively,  we can find a subsequence $(n_k)$ such that
$$
\mu(\{t\in I:  |f_{n_k}(t)|< 2^k\})\le {1\over 2^k}\quad \mbox{ for all $k\in\mathbb{N}$}.
$$
Take $M=\sup_n\V f_n\V_1$. By  Chebychev inequality
$$
\mu(\{t\in I: |f_{n_k}(t)|\ge 2^k\})\le \mu(\{t\in \Omega: |f_{n_k}(t)|\ge 2^k\})\le {M\over 2^k}.
$$
Hence,  for every $k\in\mathbb{N}$, we have $I=\{t\in I:  |f_{n_k}(t)|< 2^k\}\cup\{t\in I: |f_{n_k}(t)|\ge 2^k\}$, which implies that $\mu(I)\le {M+1\over 2^k}$. Taking limits when $k$ goes to infinity we finally deduce that $\mu(I)=0$. 

\end{proof}

We recall that a sequence $(x_n)\subset L^{p(\cdot)}(\Omega)$ is said to be $\rho$-bounded if $\sup_n\rho(x_n)<+\infty$ where $\rho(\cdot)$ is the modular defined by (\ref{mod}). 

 \begin{lem} \label{strictlyconvex} Let $ (\Omega,\Sigma,\mu) $ be a $ \sigma $-finite measure space and assume  that the exponent function $p(\cdot)$  verifies $1<p(t)<\infty$ a.e.  Let $u,v\in L^{p(\cdot)}(\Omega)$. Assume that  there exists a  $\rho$-bounded sequence $(x_n)$ in  $L^{p(\cdot)}(\Omega)$ verifying

{\small
\begin{equation}\label{important}
\lim_n\int_\Omega \left(\vert x_n(t)-u(t)\vert ^{p(t)}+\vert x_n(t)-v(t)\vert ^{p(t)}-2\left\vert x_n(t)-\frac{u(t)+v(t)}{2}\right\vert ^{p(t)}\right)d\mu=0.
\end{equation}
}
then $u=v$ a.e.
   \end{lem}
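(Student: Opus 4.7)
The plan is to deduce pointwise $\mu$-a.e. convergence of the integrand along a subsequence and then exploit, fibre by fibre, the strict convexity of $s\mapsto |s|^{p(t)}$ ensured by the hypothesis $p(t)>1$.

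First I would observe that the integrand
\[
h_n(t):=|x_n(t)-u(t)|^{p(t)}+|x_n(t)-v(t)|^{p(t)}-2\Bigl|x_n(t)-\tfrac{u(t)+v(t)}{2}\Bigr|^{p(t)}
\]
is non-negative $\mu$-a.e., simply by convexity of the real map $s\mapsto |s|^{p(t)}$ applied pointwise. Hypothesis (\ref{important}) therefore asserts that $h_n\to 0$ in $L_1(\mu)$, so by the standard passage to an a.e.-convergent subsequence (still indexed by $n$ for brevity), $h_n(t)\to 0$ for $\mu$-a.e. $t$.

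Second, I would invoke Lemma \ref{primero}: since $(x_n)$ is $\rho$-bounded and $p(t)<\infty$ a.e., the sequence $|x_n|^{p(\cdot)}$ is bounded in $L_1(\mu)$, so at $\mu$-a.e. $t$ the scalar sequence $\{|x_n(t)|^{p(t)}\}_n$ has a finite accumulation point; since $p(t)$ is a fixed positive real number at such $t$, the same holds for $\{x_n(t)\}_n$. Now fix $t$ outside the null set on which either of these two properties fails and choose a further subsequence (depending on $t$) with $x_{n_j}(t)\to \xi\in\R$. Passing to the limit in $h_{n_j}(t)\to 0$ using continuity of $s\mapsto |s|^{p(t)}$ yields
\[
|\xi-u(t)|^{p(t)}+|\xi-v(t)|^{p(t)}=2\Bigl|\xi-\tfrac{u(t)+v(t)}{2}\Bigr|^{p(t)}.
\]
Strict convexity of $s\mapsto |s|^{p(t)}$ (which follows from $p(t)>1$) forces $\xi-u(t)=\xi-v(t)$, i.e. $u(t)=v(t)$. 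As $t$ was generic outside a null set, $u=v$ $\mu$-a.e.

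The delicate point is the diagonal nature of the two extractions: the $L_1$-to-a.e. subsequence is chosen globally, while the accumulation-point subsequence depends on $t$. This is not an obstruction because $\rho$-boundedness (and hence the $L_1$ bound on $|x_n|^{p(\cdot)}$) is inherited by any subsequence, so Lemma \ref{primero} applies to the already extracted sequence, and the intersection of the two exceptional null sets is still null; the pointwise strict-convexity argument can then be carried out independently at each remaining $t$.
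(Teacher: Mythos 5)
Your proposal is correct and follows essentially the same route as the paper's proof: nonnegativity of the integrand by convexity, passage to an a.e.-convergent subsequence, Lemma \ref{primero} to extract a $t$-dependent subsequence with a finite limit $\xi=\alpha_t$, and then strict convexity of $s\mapsto |s|^{p(t)}$ pointwise. Your explicit remark on why the global and $t$-dependent extractions are compatible is a point the paper handles implicitly, but the argument is the same.
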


\begin{proof} Let  $u,v\in L^{p(\cdot)}(\Omega)$  and assume that there exists a $\rho$-bounded sequence $(x_n)$  in  $L^{p(\cdot)}(\Omega)$ verifying that the limit in  (\ref{important}) is null. Under these conditions,   we are going to find a subset $B\subset \Omega$ with $\mu(B)=0$ such that $u(t)=v(t)$ for all $t\in\Omega\setminus B$.

Note that the $\rho$-boundedness of the sequence $(x_n)$ implies that the sequence $(h_n)$ defined by  $h_n(t)=|x_n(t)|^{p(t)}$ is a bounded sequence in $L_1(\Omega)$. Using Lemma \ref{primero}, we can assume that  for almost every $t\in \Omega$, the scalar sequence $\{|x_n(t)|^{p(t)}\}_n$ has a finite accumulation point, and so does the scalar sequence $\{x_n(t)\}_n$ for almost every $t\in \Omega$.

For all $n\in\mathbb{N}$ define the function 
{\small
\begin{equation}\label{limit}
g_n(t):=\vert x_n(t)-u(t)\vert ^{p(t)}+\vert x_n(t)-v(t)\vert ^{p(t)}-2\left\vert x_n(t)-\frac{u(t)+v(t)}{2}\right\vert ^{p(t)}. 
\end{equation}}
Note that $g_n\ge 0$  by convexity and, by assumption,  $\lim_n \int_\Omega g_n(t)d\mu=0$. Extracting a subsequence,  denoted again by $(g_n)$, we can assume that $
\lim_n g_n(t)=0$  for almost every $t\in \Omega$. Thus, we can assume that there exists some  $B\subset \Omega$ with $\mu(B)=0$ such that for all $t\in \Omega\setminus B$ we have that $\lim_n g_n(t)=0$ and  there exists a subsequence $(n^t_k)$ (depending on $t$) such that $\lim_k x_{n^t_k}(t)=\alpha_t$, where $\alpha_t$ is a finite scalar.
Hence, for every $t\in \Omega\setminus B$, taking limit when $k$ goes to infinite  over the subsequence $(n^t_k)$  in (\ref{limit}) we obtain 
\begin{equation}\label{final}
|\alpha_t-u(t)|^{p(t)}+|\alpha_t-v(t)|^{p(t)}-2\left| \alpha_t-{u(t)+v(t)\over 2}\right|^{p(t)}=0.
\end{equation}
We have concluded that for all $t\in\Omega\setminus B$, the equation (\ref{final}) holds. 
The strict convexity of the function $s\to s^{p(t)}$ (since $p(t)>1$) implies that $u(t)=v(t)$ for  $t\in \Omega\setminus B$ as we wanted to prove.

\end{proof}

We recall that a Banach space $X$ is said to have weak normal structure ($w$-NS)   if for every convex weakly compact  subset  $C$ with ${\rm diam}\,(C)>0$, there exists some $x_0\in C$ such that  $\sup\{\V x_0-y\V: y\in C\}
<{\rm diam}\,(C)$.
The notion of  normal structure was initially defined by Brodskii and Milman in 1948 \cite{BM} and  W. Kirk   established  its relationship with the existence of fixed points for nonexpansive mappings:  {\it Every Banach space with $w$-NS satisfies the $w$-FPP}  \cite{Ki}.

\medskip

The main theorem of this section is the following:

\begin{theorem} \label{char} Let $ (\Omega,\Sigma,\mu) $ be a  $ \sigma $-finite    measure space and $p:\Omega\to [1,+\infty]$ be a measurable function. As usual, let us denote by $\Omega_f=\{t\in \Omega: p(t)<+\infty\}$. The following conditions are all equivalent:
\begin{itemize}

\item[1)] $L^{p(\cdot)}(\Omega)$ satisfies the weak  normal structure.

\item[2)] $L^{p(\cdot)}(\Omega)$ satisfies the w-FPP.

\item[3)]  $L^{p(\cdot)}(\Omega)$ does  not contain isometrically  $L^1[0,1]$.

\item[4)]   $p_+(\Omega_f)<+\infty$, $p^{-1}(\{+\infty\})$ contains  finitely many atoms at most and every measurable atomless  subset of $p^{-1}(\{1,+\infty\})$ is negligible.

\end{itemize}\end{theorem}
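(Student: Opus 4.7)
The plan is to close the cycle $(1)\Rightarrow(2)\Rightarrow(3)\Rightarrow(4)\Rightarrow(1)$. The implication $(1)\Rightarrow(2)$ is Kirk's theorem, already recalled in the excerpt. For $(2)\Rightarrow(3)$, any isometric embedding $J\colon L^{1}[0,1]\hookrightarrow L^{p(\cdot)}(\Omega)$ is bounded and linear, hence weak-to-weak continuous, so Alspach's fixed-point-free nonexpansive self-map on a weakly compact convex subset of $L^{1}[0,1]$ is transported through $J$ into a witness of the failure of the $w$-FPP in $L^{p(\cdot)}(\Omega)$.

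For $(3)\Rightarrow(4)$ I argue contrapositively, covering the four possible failure modes of $(4)$: if $p_+(\Omega_f)=+\infty$, Theorem \ref{iso} embeds every separable Banach space, in particular $L^{1}[0,1]$, isometrically into $L^{p(\cdot)}(\Omega)$; if $p^{-1}(\{+\infty\})$ has infinitely many atoms, the subspace supported on them is isometric to $\ell_\infty$, which itself contains $L^{1}[0,1]$ isometrically; if $p^{-1}(\{1\})$ contains a non-negligible atomless set $E$, its restriction is isometric to $L^1(E)\cong L^{1}[0,1]$; and if $p^{-1}(\{+\infty\})$ contains such a set, the restriction is isometric to $L^\infty[0,1]$, which again contains $L^{1}[0,1]$.

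The core implication is $(4)\Rightarrow(1)$. Under $(4)$, write $\Omega$ essentially as the disjoint union $\Omega^*\cup A_1\cup A_\infty$, with $\Omega^*=\Omega\setminus p^{-1}(\{1,+\infty\})$ satisfying $1<p_-(\Omega^*)\le p_+(\Omega^*)<+\infty$, $A_1$ an at-most-countable set of atoms in $p^{-1}(\{1\})$, and $A_\infty$ a finite set of atoms in $p^{-1}(\{+\infty\})$. The corresponding subspaces are respectively uniformly convex (Theorem \ref{LuPiPo}), a weighted $\ell_1$, and a finite-dimensional $\ell_\infty$-type space. Suppose toward a contradiction that $w$-NS fails: some weakly compact convex $C\subset L^{p(\cdot)}(\Omega)$ of diameter $d>0$ has every point diametral. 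By a Brodskii--Milman construction there is $(x_n)\subset C$ with $d\bigl(x_{n+1},\overline{\operatorname{conv}}\{x_1,\dots,x_n\}\bigr)\to d$; weak compactness and translation by the weak limit allow me to assume $0\in C$, $x_n\rightharpoonup 0$, and $\lim_n\|x_n-x_k\|=d$ for every fixed $k$. Since the coordinate projections onto $A_1$ and $A_\infty$ are bounded linear, the Schur property of $\ell_1$ and the finite dimensionality of the $A_\infty$-piece yield $\|x_n|_{A_1}\|+\|x_n|_{A_\infty}\|\to 0$.

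Normalize $d=1$. For each fixed $k$ the midpoint $z=x_k/2\in C$ is diametral, so some sequence $(w_j)\subset C$ satisfies $\|w_j-z\|\to 1$; the inequality $2\|w_j-z\|\le\|w_j\|+\|w_j-x_k\|\le 2$ then forces $\|w_j\|,\|w_j-x_k\|\to 1$ as well. Lemma \ref{properties}(c) upgrades the three norm limits to $\rho(w_j),\rho(w_j-x_k),\rho(w_j-x_k/2)\to 1$, while Lemma \ref{properties}(b) supplies $\rho$-boundedness of $(w_j)$. Decomposing $\rho=\rho_{\Omega^*}+\rho_{A_1}+\rho_{A_\infty}$ and using pointwise convexity of $s\mapsto s^{p(t)}$ and of the maximum, the three summands of $\rho(w_j)+\rho(w_j-x_k)-2\rho(w_j-x_k/2)$ are each non-negative and their sum tends to $1+1-2=0$, so the $\Omega^*$-summand vanishes in the limit. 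Lemma \ref{strictlyconvex} applied on $\Omega^*$ to $(w_j|_{\Omega^*})$ with $u=0$, $v=x_k|_{\Omega^*}$ then yields $x_k=0$ a.e.\ on $\Omega^*$, for every $k$. Consequently $x_k$ is essentially supported on $A_1\cup A_\infty$, giving $\|x_k\|\to 0$ as $k\to\infty$ by the Schur and finite-dimensional reductions. The reverse triangle inequality applied to $\lim_n\|x_n-x_k\|=d$ and $\|x_k\|\to 0$ forces $\liminf_n\|x_n\|\ge d>0$, which contradicts $\|x_n\|\to 0$. The main obstacle is precisely this coordination across the decomposition: Lemma \ref{strictlyconvex} is applicable only on $\Omega^*$, so condition $(4)$ is indispensable for neutralizing the $A_1\cup A_\infty$ part via Schur and finite dimensionality, and the normalization $d=1$ is essential because Lemma \ref{properties} relates norm and modular only in a one-sided way, with equality of limits occurring exactly at the critical scale.
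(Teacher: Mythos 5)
Your proof is correct and follows essentially the same route as the paper: the same cycle of implications, the same use of Theorem~\ref{iso} and of isometric copies of $\ell_\infty$, $L^1$ and $L^\infty$ for $3)\Rightarrow 4)$, and, for $4)\Rightarrow 1)$, the same decomposition of $\Omega$ into $\Omega^*$ plus the atoms of $p^{-1}(\{1,+\infty\})$, with Lemma~\ref{strictlyconvex} killing the diametral structure on $\Omega^*$ and the Schur property (plus finite-dimensionality) killing it on the atomic part. The only, harmless, tactical difference is in how Lemma~\ref{strictlyconvex} is fed: the paper plugs in the weakly null diametral sequence itself (stripped of its component on the atoms) with arbitrary $u,v$ from its convex hull, whereas you generate, for each $k$, a radius-attaining sequence $(w_j)$ at the midpoint $x_k/2$ and take $u=0$, $v=x_k$; both reduce to the same nonnegative modular-defect computation and the same endgame.
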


\begin{proof}

We already know that $1)\Rightarrow 2)$ and $2)\Rightarrow 3)$  from \cite{Ki} and \cite{Al}.   Clearly  $3)\Rightarrow 4)$. Indeed, if $p_+(\Omega_f)=+\infty$ we can find an isometric copy of $L^1[0,1]$ from Theorem \ref{iso}.
If $p^{-1}(\{+\infty\})$ contains infinitely many atoms, we have an isometric copy of $\ell_\infty$ and hence an isometric copy of $L^1[0,1]$. Finally, if
there is a measurable atomless    subset  contained in $p^{-1}(\{1,\infty\})$ with positive measure, once more $L^{p(\cdot)}(\Omega)$  contains an isometric copy of $L^1[0,1]$.

\medskip

Finally, let us prove that $4)\Rightarrow 1)$:  Set $F:=F_1\cup F_\infty$ where by  $F_1$, $F_\infty$ we  denote the set of  atoms in $p^{-1}(\{1\})$ and  $p^{-1}(\{ +\infty\})$ respectively. In view of  $4)$, the cardinal of $F_\infty$ is finite so $L^{p(\cdot)}(F)$ has the Schur property (since it is isomorphic to $\ell_1$).

Assume, by contrary, that $L^{p(\cdot)}(\Omega)$ fails to have weakly normal structure. Standard arguments imply that  $L^{p(\cdot)}(\Omega)$ contains a weakly null diametral sequence (see for instance \cite[Lemma 4.1]{GK}), that is, a sequence $(y_n)$ with ${\rm diam}\,(\{y_n:n\in\mathbb{N}\})=1$ and such that
$\lim_n d(y_n,\textrm{co}\,\{y_1,...,y_{n-1}\})=1.$
In particular $
\lim_n\V y_n-y\V=1$ for all  $y\in \textrm{co}\,(\{y_n\}).$

We select  $u,v\in \textrm{co}\,(\{y_n\})$ that will be fixed in what follows.

 Note that we  can write $y_n=z_n+x_n$, where $z_n(t)=y_n(t)$ if $t\in F$ and zero otherwise, while $x_n=y_n-z_n$. It is clear that $(z_n)$ and $(x_n)$ are weakly null sequences, ${\rm supp}\,(z_n)\subset F$ and ${\rm supp }\,(x_n)\subset \Omega\setminus F$ for all $n\in\mathbb{N}$.
From the Schur property, the sequence $(z_n)$ 
is  norm convergent,  $\lim_n\V x_n-y_n\V=0$ and this implies that
$$
\lim_n\V x_n-y\V=1\quad  \forall y\in \textrm{co}\,(\{y_n\}).
$$
  The above condition implies that $$
\lim_n\V x_n-u\V=\lim_n\V x_n-v\V=\lim_n\left\V x_n-{u+v\over 2}\right\V=1.
$$
From the assumption $p_+(\Omega_f)<+\infty$ and Lemma \ref{properties}.c.i) we infer that
$$
\lim_n\rho( x_n-u)=\lim_n\rho( x_n-v)=\lim_n\rho\left( x_n-{u+v\over 2}\right)=1
$$
and consequently 
\begin{equation}\label{rho}
\lim_n\left[ \rho( x_n-u)+\rho( x_n-v)-2\rho\left( x_n-{u+v\over 2}\right)\right]=0.
\end{equation}

Define $u_F(t)=u(t)$ if $t\in F$, zero otherwise and $u_0=u-u_F$. Analogously we define $v_F$ and $v_0$. Thus $u=u_0+u_F$, $v=v_0+v_F$, where $u_0,v_0\in L^{p(\cdot)}(\Omega\setminus F)$ and $u_F,v_F \in L^{p(\cdot)}(F)$.
If we denote by $\rho_0(g):=\int_{\Omega\setminus F}|g|^{p(t)}d\mu$ and $\rho_F(g):=\rho(g)-\rho_0(g)$ for $g\in L^{p(\cdot)}(\Omega)$,  we have
$$
\rho(x_n-u)=\rho_0(x_n-u_0)+\rho_F(u_F)
$$
and a similar decomposition is obtained for $\rho(x_n-v)$ and $\rho\left(x_n-{u+v\over 2}\right)$.

Condition (\ref{rho}) is now translated to $A_1+A_2=0$, where
$$
A_1:=\rho_F( u_F)+\rho_F(v_F) -2\rho_F\left( {u_F+v_F\over 2}\right)
$$
and
$$
A_2:=\lim_n\left[ \rho_0( x_n-u_0)+\rho_0( x_n-v_0)-2\rho_0\left( x_n-{u_0+v_0\over 2}\right)\right].
$$
By convexity both $A_1,A_2\ge 0$, so we have that $A_1=A_2=0$.
Conse\-quently

{\footnotesize\begin{equation}\label{lulu}
\lim_n \int_{\Omega\setminus F} \left(|x_n(t)-u_0(t)|^{p(t)}+|x_n(t)-v_0(t)|^{p(t)}-2\left| x_n(t)-{u_0(t)+v_0(t)\over 2}\right|^{p(t)}\right)d\mu=0.
\end{equation}}

Due to the fact that  $p_+(\Omega_f)<+\infty$ and the remaining conditions in $4)$,  we have   $\sup_n\rho_0(x_n)<+\infty$. Furthermore, $1<p(t)<+\infty$ a.e. in $\Omega\setminus F$.  Consequently, for the vectors $u,v\in  \textrm{co}\,(\{y_n\})$ chosen beforehand, we know  of the existence of a  $\rho$-bounded sequence $(x_n)$ such that (\ref{lulu}) holds.

Applying  Lemma \ref{strictlyconvex}  for the set $\Omega\setminus F$, we deduce  that $u_0=v_0$ e.c.t. $\Omega\setminus F$ and 
  $u(t)=v(t)$ a.e. in $\Omega\setminus F$.
  
  Due to the arbitrariness of the vectors $u,v\in \textrm{co}\,(\{y_n\})$, we can deduce that for all $n,m$ we have that $y_n=y_m$ a.e. in $\Omega\setminus F$.
Since $(y_n)$ is a weakly null sequence, $y_n(t)=0$ a.e. in $\Omega\setminus F$ and $y_n\in L^{p(\cdot)}(F)$, which has the Schur property. Consequently $\lim_n\V y_n\V=0$ in contradiction with the fact that $(y_n)$ is diametral.

\end{proof}

At this stage we would like to highlight that the absence of an isometric copy of $L^1[0,1]$ is a necessary condition for having the $w$-FPP. 
As it was proved in Theorem \ref{char},  it turns out to be  an equivalence for the family of  VLSs.
Furthermore, according to Theorem \ref{char}, we can find plenty of examples  of nonreflexive VLSs that still have the  $w$-FPP. For instance, the Banach function space $L^{1+x}([0,1])$ is one of these spaces. Notice that this is not possible   for classic Lebesgue spaces, where $L^p(\Omega)$  has the $w$-FPP if and only if it is reflexive. In the particular case of a purely atomic measure,  sufficient conditions implying the $w$-FPP  had  been  studied previously in   \cite{CH, iran}  (see also \cite[Chapter 12]{Kirk-Sims} and references therein).

\section{Fixed point property and reflexivity: Maurey's result  and its converse in Variable Lebesgue Spaces}

As it was mentioned in the Introduction, one of the most relevant results  backing the 
 long-standing conjecture \lq\lq reflexivity implies   FPP"  was published by B. Maurey  in  1980 \cite{maurey}: {\it 
 every closed reflexive subspace of $L^1[0,1]$ has the FPP}. This together with P. Dowling and C. Lennard's converse    \cite{DL} lead to the following characterization:

\begin{theorem}\cite{maurey}\cite{DL}\label{ref} Let $X$ be a closed subspace of $L^1[0,1]$. Then $X$ is reflexive if and only if it  has the FPP.
\end{theorem}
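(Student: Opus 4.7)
Since the statement packages two landmark results---Maurey's 1980 theorem in one direction and the Dowling--Lennard converse in the other---my plan handles the two implications separately, beginning with the easier one.

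For the converse direction (FPP $\Rightarrow$ reflexive), I would argue by contrapositive: assume $X \subset L^1[0,1]$ is a closed nonreflexive subspace and manufacture a fixed-point-free nonexpansive map on a closed bounded convex subset. The first step is to extract from $X$ a sequence $(f_n)$ equivalent to the $\ell_1$ unit vector basis; this combines Rosenthal's $\ell_1$-theorem (nonreflexivity furnishes a bounded sequence with no weakly Cauchy subsequence) with the Kadec--Pelczynski / subsequence splitting lemma cited already in the paper: the non-uniformly-integrable alternative yields, after a subsequence, an almost disjointly supported sequence, which in $L^1$ is $\ell_1$-like. A careful perturbation within this dichotomy upgrades the copy to an \emph{asymptotically isometric} copy of $\ell_1$, that is, a sequence $(g_n) \subset X$ and $\varepsilon_n \downarrow 0$ with $\sum(1-\varepsilon_n)|t_n| \le \|\sum t_n g_n\| \le \sum |t_n|$. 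The final step is the standard Dowling--Lennard--Turett construction: on the closed convex set $C = \{\sum t_n g_n : t_n \ge 0,\ \sum t_n = 1\}$, the forward shift $T(\sum t_n g_n) = \sum t_n g_{n+1}$ is nonexpansive (thanks to the asymptotic-isometric estimate) and has no fixed point, so $X$ fails the FPP.

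For Maurey's direction, assume $X \subset L^1[0,1]$ is reflexive, take a closed bounded convex $C \subset X$ and a nonexpansive $T:C \to C$, and suppose for contradiction that $T$ is fixed-point-free. Reflexivity makes $C$ weakly compact, so Zorn's lemma produces a minimal $T$-invariant closed convex nonempty $K \subset C$, and a standard argument produces an approximate fixed point sequence $(x_n) \subset K$. The Goebel--Karlovitz lemma then forces $(x_n)$ to be \emph{diametral}: $\lim_n \|x_n - y\| = {\rm diam}(K) =: d > 0$ for every $y \in K$. The decisive move is to pass to a free ultrapower $(L^1[0,1])_{\mathcal{U}}$, inside which $X_{\mathcal{U}}$ embeds isometrically. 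The structural miracle that Maurey exploits is that the ultrapower of $L^1$ is itself an $L^1$-space over an enlarged measure, so the norm decomposes additively on disjoint supports. Represent $\tilde x = [(x_n)] \in X_{\mathcal{U}}$ and apply a Kadec--Pelczynski splitting in the ultrapower to separate the lifted diametral sequence into a uniformly integrable (``regular'') part and a disjointly supported (``singular'') part relative to an appropriately chosen centre in $K_{\mathcal{U}}$. Reflexivity of $X$ translates, via weak-sequential compactness of convex hulls, into quantitative smallness of the singular part, while diametrality forces it to carry essentially all of the mass $d$; combining the two gives $d<d$, the desired contradiction.

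The hard part is unquestionably Maurey's direction: in particular, making rigorous the ``singular part is controlled by reflexivity'' step requires the full use of the $L^1$-structure of the ultrapower, the fact that orbits of an approximate fixed point remain in a weakly compact set (because $X$ is reflexive), and a delicate integration of the modular over the disjoint support pieces produced by the Kadec--Pelczynski split. The Dowling--Lennard converse is, by comparison, an almost mechanical consequence of the $\ell_1$-dichotomy in $L^1$ once the asymptotic-isometric-$\ell_1$ machinery is in place.
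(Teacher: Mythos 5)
First, a point of calibration: the paper does not prove this theorem at all --- it is imported verbatim from \cite{maurey} and \cite{DL} --- so your sketch has to be measured against those arguments rather than against anything in the text. Your treatment of the Dowling--Lennard direction is essentially the right argument and is consistent with the machinery the paper itself deploys in its final section (subsequence splitting, asymptotically isometric copies of $\ell_1$, and the fact that such a copy kills the FPP). One small repair is needed: on the set $C=\{\sum t_ng_n : t_n\ge 0,\ \sum t_n=1\}$ the \emph{unweighted} forward shift need not be nonexpansive, since the lower estimate only gives $\|x-y\|\ge\sum(1-\varepsilon_n)|t_n-s_n|$ while the shifted vector is bounded above by $\sum|t_n-s_n|$. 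The standard fix is to pass to a subsequence with $\sum\varepsilon_n<\infty$ and replace $g_n$ by $c_ng_n$ with $c_{n+1}\le(1-\varepsilon_n)c_n$ and $\inf_n c_n>0$; then the shift is genuinely nonexpansive and fixed-point free.

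The Maurey direction, however, contains a genuine gap: the proposed endgame cannot work as described. You claim that reflexivity makes the ``singular'' (disjointly supported) part of the lifted diametral sequence quantitatively small while diametrality forces it to carry essentially all of the mass $d$, yielding $d<d$. But by the Dunford--Pettis theorem \emph{every} weakly compact convex subset $K$ of $L^1[0,1]$ is uniformly integrable, so any diametral sequence in $K$ is already uniformly integrable and its disjoint part in a subsequence splitting is norm-null --- reflexivity of the ambient subspace plays no role in that step. If your dichotomy were the engine of the proof, it would apply to every weakly compact convex subset of $L^1[0,1]$ and establish the $w$-FPP for $L^1[0,1]$ itself, contradicting Alspach's example \cite{Al}, which the paper quotes repeatedly. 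So the argument proves too much, which means the decisive use of reflexivity is missing. In Maurey's actual proof the ultrapower is used differently: one works with the set $W$ of fixed points of $\tilde T$ in $\tilde K$, shows via the order-interval characterization of metric midpoints in (ultrapowers of) $L^1$ that $W$ admits metric midpoints, and contradicts Lin's strengthening of the Goebel--Karlovitz lemma. Reflexivity of $X$ enters because uniform integrability of the whole unit ball $B_X$ (equivalent to reflexivity for subspaces of $L^1$) is a uniform, first-order property that survives passage to the ultrapower, so $\tilde K$ remains weakly compact there --- whereas weak compactness of a single set $K$, as in Alspach's example, does not transfer. Without isolating that mechanism, the sketch of this direction is not a proof.
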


Since every  $\sigma$-finite $L^1$-space is isometric to a probability space, Theorem \ref{ref} easily applies to the $\sigma$-finite case $L^1(\mu)$. The natural question that  rises straight away is whether one or the two implications in Theorem \ref{ref}
  may still hold in our variable setting.
In this section we will prove that  Maurey's result extends  to (nonreflexive) VLSs, whereas P. Dowling and C. Lennard's converse is no longer true.

\medskip

\begin{theorem}\label{reflexive} Let $ (\Omega,\Sigma,\mu) $ be a  $ \sigma $-finite  measure space  and $p_+<+\infty$. Let $X$ be a reflexive  subspace of  $L^{p(\cdot)}(\Omega)$. Then, $X$ satisfies the FPP.\end{theorem}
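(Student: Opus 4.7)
The plan is to adapt B. Maurey's argument for $L^1$ to the variable-exponent setting, exploiting that $p_+<\infty$ forces the Luxemburg norm on $L^{p(\cdot)}(\Omega)$ to be order continuous, which in turn makes the Banach-lattice subsequence splitting lemma \cite{W} available. Assume toward a contradiction that $X$ contains a closed convex bounded set $C$ on which some nonexpansive $T$ has no fixed point. Since $X$ is reflexive, Zorn's lemma produces a minimal nonempty $T$-invariant weakly compact convex set $K\subseteq C$; rescaling, one may take $\textrm{diam}(K)=1$.

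Next, select an approximate fixed-point sequence $(x_n)\subset K$ with $\V x_n-Tx_n\V\to 0$. The Goebel-Karlovitz lemma gives $\lim_n\V x_n-y\V=1$ for every $y\in K$. Reflexivity yields, after passing to a subsequence, $x_n\rightharpoonup x_\infty\in K$, and translating we take $x_\infty=0$. Applying the subsequence splitting lemma \cite{W} in the order-continuous lattice $L^{p(\cdot)}(\Omega)$, after a further subsequence write $x_n=u_n+v_n$, where the $u_n$ have pairwise disjoint supports and $(v_n)$ is relatively norm compact. Since disjoint bounded sequences are weakly null in any order-continuous lattice and $x_n\rightharpoonup 0$, the sequence $(v_n)$ is also weakly null; combined with its relative compactness this forces $\V v_n\V\to 0$. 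Modulo a norm-null perturbation, the a.f.p.s.\ $(x_n)$ is thus a disjointly supported weakly null sequence.

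Now the key computational step: for any $y\in K$, split $\rho(x_n-y)$ into integrals over $\supp(y)$ and its complement. Disjointness of the $u_n$ together with order continuity (which forces $u_n\chi_{\supp(y)}\to 0$ in modular) yields the asymptotic identity $\rho(x_n-y)=\rho(x_n)+\rho(y)+o(1)$. Together with Lemma \ref{properties} $c.i)$ this upgrades $\V x_n-y\V\to 1$ into a rigid modular condition valid for every $y\in K$. Feeding two well-chosen vectors $u,v\in \textrm{co}\,(\{x_n\})\subset K$ into the strict-convexity Lemma \ref{strictlyconvex} (applicable because, by Theorem \ref{refle}, a reflexive subspace of $L^{p(\cdot)}(\Omega)$ can charge $p^{-1}(\{1,+\infty\})$ only on a finite $\ell_1$-like piece, which is negligible for the disjoint weakly null tail of $(x_n)$) forces $u=v$ a.e., contradicting the diametrality encoded in Goebel-Karlovitz.

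The main obstacle is converting between modular and norm: in pure $L^1$ Maurey exploits the exact additivity $\V f+g\V_1=\V f\V_1+\V g\V_1$ on disjoint supports, whereas here only the modular is additive and each conversion via Lemma \ref{properties} introduces exponent-dependent distortions that need to stay benign along the a.f.p.s. This is precisely where the near-infinity concentrated norm framework of \cite{jfa} should come into play, furnishing the quantitative control that pushes the mass of $(x_n)$ out to disjoint supports and lets the asymptotic norm identity be transported back into the modular identity needed to invoke Lemma \ref{strictlyconvex}.
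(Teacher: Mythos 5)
There is a genuine gap, and it sits exactly at the heart of the theorem. Your argument never actually reduces anything to Maurey's $L^1$ theorem; instead you try to dispose of the set $p^{-1}(\{1\})$ by claiming that, ``by Theorem \ref{refle}, a reflexive subspace of $L^{p(\cdot)}(\Omega)$ can charge $p^{-1}(\{1,+\infty\})$ only on a finite $\ell_1$-like piece.'' That is false: Theorem \ref{refle} characterizes reflexivity of the \emph{whole} space $L^{p(\cdot)}(\Omega)$, not of its subspaces. A reflexive subspace may live entirely inside $p^{-1}(\{1\})$ over a nonatomic set --- the closed span of the Rademacher functions in $L^1[0,1]$ is reflexive and every one of its elements is supported where $p\equiv 1$. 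On that set Lemma \ref{strictlyconvex} is unavailable (it needs $1<p(t)<\infty$ a.e.), so your strict-convexity step proves nothing there. Since the case $p\equiv 1$ of Theorem \ref{reflexive} \emph{is} Maurey's theorem, any proof that does not invoke it (or reprove its ultraproduct argument) cannot close. The paper's proof is built around precisely this point: with $0\in K$ and Goebel--Karlovitz, Lemma \ref{strictlyconvex} applied on $\Omega_2=\{t:p(t)>1\}$ shows every $u\in K$ satisfies $u\chi_{\Omega_2}=0$ a.e., so the minimal set $K$ lies in the reflexive subspace $Y=\{f\in X: f\chi_{\Omega_2}=0\}$ of $L^1(p^{-1}(\{1\}))$, and \emph{then} Maurey's theorem forces $K$ to be a singleton.

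There are also several incorrect intermediate claims that would need repair even for the $\{p>1\}$ part. The subsequence splitting lemma gives an equi-integrable (hence relatively \emph{weakly} compact) part, not a relatively norm compact one, so ``weakly null $+$ relatively compact $\Rightarrow$ norm null'' does not apply (Rademachers again: equi-integrable, weakly null, norm bounded below). The assertion that disjoint bounded sequences in an order-continuous lattice are weakly null is false (the unit vector basis of $\ell_1$). And the asymptotic identity $\rho(x_n-y)=\rho(x_n)+\rho(y)+o(1)$ is not ``rigid'' in any useful sense --- it is exactly the configuration realized in Alspach's fixed-point-free example in $L^1$, so it cannot by itself yield a contradiction. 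The closing appeal to near-infinity concentrated norms is a misdirection: in the paper that machinery is used for the \emph{converse} direction (building nonreflexive subspaces with the FPP), not for extending Maurey's result. None of this is fatal to the overall strategy of adapting Maurey, but as written the proposal proves the theorem only on $\{p>1\}$ and silently assumes away the one case the theorem is really about.
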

\begin{proof}
Let $X$ be a reflexive  subspace of  $L^{p(\cdot)}(\Omega)$ with $p_+<\infty$. 
If $L^{p(\cdot)}(\Omega)$ has the $w$-FPP,  we are done. Thus according to Theorem \ref{char}.4)   we can assume that $\mu(p^{-1}(\{1\}))>0$.

Denote $\Omega_1:=p^{-1}(\{1\})$, $\Omega_2:=\Omega\setminus \Omega_1$.
Define $Y:=\{f\in X: f\chi_{\Omega_2}=0\ a.e.\}$. Note that $Y$ is a closed subspace (possible empty) of $X$ and therefore $Y$ is reflexive. Furthermore,  $Y$ is embedded isometrically in $L^1(\Omega_1)$.

By contradiction, let us assume  that $X$ fails to have the FPP. Standard arguments show that there exist a convex weakly compact subset $K\subset X$ and $T:K\to K$ nonexpansive without fixed points. Furthermore, we can assume that $K$ is minimal $T$-invariant, $0\in K$,  ${\rm diam}\,(K)=1$ and there exists a weakly-null sequence $\{x_n\}$ in $K$ which is an approximate fixed point sequence.  As a consequence of Goebel-Karlovitz Lemma \cite[page 124]{GK}, $\lim_n \V x_n-x\V=1$ for all $x\in K$. Fix some $u, v\in K$. Proceeding as in the proof of  Theorem \ref{char}, we can assume that for $i=1,2$:
{\small
$$
\lim_n \int_{\Omega_i} \left(|x_n(t)-u(t)|^{p(t)}+|x_n(t)-v(t)|^{p(t)}-2\left| x_n(t)-{u(t)+v(t)\over 2}\right|^{p(t)}\right)d\mu=0.
$$
}
In particular, applying Lemma \ref{strictlyconvex} to $\Omega_2$, where $1<p(t)<\infty$ a.e.,  we deduce that $u\chi_{\Omega_2}=v\chi_{\Omega_2}$ a.e. Since $0\in K$, $u\chi_{\Omega_2}=0$ a.e. for all $u\in K$. Thus, $K$ is a convex weakly compact of  $Y$,  which has the FPP according to Maurey \cite{maurey}. This implies that $K$ is a singleton (since it is minimal) in contradiction to the the fact that diam$(K)=1$.

\end{proof}

The second part of this section is dedicated to study the 
 converse of Maurey's result.   Surprisingly, we  are going to prove that, under certain conditions over the function $p(\cdot)$, every nonreflexive $L^{p(\cdot)}(\Omega)$ contains a further nonreflexive Banach space fulfilling the FPP, in sharp contrast with the  $L^1[0,1]$-case and bringing to light   new intrinsic  features of variable Lebesgue spaces   that are not shared by their classic counterparts.

\medskip

In order to do that, we  introduce the concept of near-infinity concentrated norm  defined in \cite{jfa} for Banach spaces with a Schauder basis. Recall that if $\{e_n\}$ is a Schauder basis for a Banach space $X$, we denote by ${\rm supp}\,(x)=\{n\in\mathbb{N}: x(n)\ne 0\}$, $Q_k(x)=\sum_{n=k}^\infty x(n)e_n$ and $P_k(x)=\sum_{n=1}^{k-1}x(n)e_n$, where $x=\sum_{n=1}^\infty x(n) e_n\in X$. The norm  is said to be premonotone  for the basis $\{e_n\}$ when $\V Q_k\V\le 1$ for every $k\in\mathbb{N}$. For $k\in\mathbb{N}$ and $x\in X$, we say that $k\le x$ if $k\le \min\{{\rm supp}\,(x)\}$.

\begin{definition}\cite{jfa}\label{jfa-nic} Let $(X,\V\cdot\V)$ be a Banach space with a Schauder basis $\{e_n\}$. The norm is said to be  near-infinity concentrated (n.i.c.) if it has the following properties:

\begin{itemize}
\item[ (1)] The norm  is  sequentially separating  \cite{ja}, that is, for every $\epsilon>0$ there exists some $k\in\mathbb{N}$ such that
$$
\V x\V+\limsup_n\V x_n\V\le (1+\epsilon)\limsup_n\V x+x_n\V
$$
whenever $k\le x$ and $(x_n)$ is a block basic sequence of $\{e_n\}$.

\item[ (2)] The norm is premonotone.
\item[ (3)] There exist some  $R_0>5$ and $M\in [0,1)$ such that for every  $k\in \mathbb{N}$, we can find  a function $F_k:(0, \infty) \to [0, \infty)$ satisfying the following conditions:
\begin{itemize}
\item[(3.a)]  $\lim_{\lambda\to 0^+} {F_k(\lambda)\over \lambda}\le {M\over R_0}$.
\item[(3.b)]  $\forall$ $z\in X$  with $\V z\V\le R_0$,  ${\rm supp}\,(z)\subset[1,k]$  and for every bounded coordinate-null sequence $(x_n)\subset X$ with $\liminf_n \Vert x_n\Vert \geq 1$ we have:
$$ \limsup_n \Vert x_n+\lambda z\Vert \leq \limsup_n \Vert x_n\Vert +F_k(\lambda )\Vert z\Vert\qquad \forall \lambda\in (0,\infty).
$$
\end{itemize}
\end{itemize}

\end{definition}

The main result in \cite{jfa} is the following:

\begin{theorem}
Let $(X,\V\cdot\V)$ be a Banach space with a boundedly complete Schauder basis. If the norm $\V\cdot\V$ is  n.i.c., then $(X, \V\cdot\V)$ has the FPP.
\end{theorem}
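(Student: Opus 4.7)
The plan is a proof by contradiction via the classical minimal-invariant-set machinery, with the three conditions defining the n.i.c.\ property doing the quantitative work. Suppose $X$ fails the FPP: there is a closed bounded convex $C\subset X$ and a nonexpansive $T:C\to C$ with no fixed point. Applying Zorn's lemma to the nonempty closed convex $T$-invariant subsets of $C$ yields a minimal such $K$, and after an affine normalization I may assume $0\in K$ and $\mathrm{diam}(K)=1$. A standard iteration produces an approximate fixed point sequence $(x_n)\subset K$, and a Goebel--Karlovitz-type argument, which remains valid in this non-reflexive setting thanks to the boundedly complete basis, delivers $\lim_n\|x_n-y\|=1$ for every $y\in K$; in particular $\lim_n\|x_n\|=1$.

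The second step is to refine $(x_n)$ using the boundedly complete basis. A further extraction makes $(x_n)$ coordinate-null, since a subsequence with a nonzero coordinate limit would, by bounded completeness, produce a norm-convergent subsequence whose limit would have to be a fixed point of $T$. A gliding-hump argument then lets me assume $(x_n)$ is arbitrarily close in norm to a block basic sequence of $\{e_n\}$; the premonotone property (2) ensures the perturbation error stays under control.

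The crucial third step combines (1) and (3.b) to force every $u\in K$ to be zero. Fix $\epsilon>0$ small and let $k$ be the integer given by (1). For $u\in K$ write $u=P_ku+Q_ku$. Applying (1) to $Q_ku$, whose support lies in $[k,\infty)$, against the block-basic approximation of $(x_n)$ gives
\begin{equation*}
\|Q_ku\| + 1 \le (1+\epsilon)\limsup_n\|Q_ku + x_n\|.
\end{equation*}
Applying (3.b) to the coordinate-null afps with $z=\pm P_ku$ and suitably small $\lambda$ bounds $\limsup_n\|x_n + P_ku\|$ above by $1+(F_k(\lambda)/\lambda)\|P_ku\|$, where $F_k(\lambda)/\lambda$ can be made arbitrarily close to $M/R_0<1/5$ by (3.a). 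Chaining these two estimates with the Goebel--Karlovitz identity $\lim_n\|x_n-u\|=1$ and the premonotone bound $\|Q_ku\|\ge \|u\|-\|P_ku\|$ forces an inequality of the shape $\|u\|\le c(\epsilon,\lambda)$, with $c(\epsilon,\lambda)\to 0$ as $\epsilon,\lambda\to 0^+$, hence $u=0$. Since $u\in K$ was arbitrary, $K=\{0\}$, contradicting $\mathrm{diam}(K)=1$.

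The main obstacle is the algebra in the third step: the bounds $R_0>5$ and $M<1$ in (3), combined with $\lim_{\lambda\to 0^+}F_k(\lambda)/\lambda\le M/R_0$ from (3.a), are tailored so that the gain from (3.b) strictly beats the $(1+\epsilon)$-slack from (1) once $\epsilon$ is small enough; verifying that all the constants align along the chain of triangle inequalities is the technical crux. A secondary subtlety is the transfer of condition (1) from genuine block basic sequences to the coordinate-null afps $(x_n)$, where the gliding-hump perturbation must be combined with premonotonicity to avoid losing the essential inequality.
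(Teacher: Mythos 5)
The paper does not prove this theorem at all: it is imported verbatim from reference [jfa] and used as a black box (the paper's own work is to verify the n.i.c.\ hypotheses for concrete subspaces of $L^{p(\cdot)}(\Omega)$ in Theorem 4.7). So your sketch can only be measured against the proof in [jfa], whose overall architecture it does resemble: contradiction, approximate fixed point sequence, a Karlovitz-type diameter identity, and then playing conditions (1) and (3) against each other. Within that skeleton, however, two load-bearing steps are asserted rather than supplied.

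First, the compactness you invoke is simply not available. In a nonreflexive space a closed bounded convex set need not be weakly compact, so neither the Zorn's-lemma extraction of a minimal invariant $K$ (a decreasing chain of nonempty closed convex $T$-invariant sets can have empty intersection) nor the Goebel--Karlovitz lemma applies as stated. The sentence ``a Goebel--Karlovitz-type argument \dots remains valid in this non-reflexive setting thanks to the boundedly complete basis'' is exactly the claim that has to be proved. The actual mechanism is that the boundedly complete, premonotone basis lets one work with the coordinatewise (weak$^*$) topology induced by the biorthogonal functionals: bounded coordinatewise-closed convex sets are compact in it, one takes minimal \emph{weak$^*$-compact} convex invariant sets and weak$^*$-null approximate fixed point sequences, proves a weak$^*$ version of Goebel--Karlovitz, and must still explain why failure of the FPP on closed bounded convex sets yields a fixed-point-free nonexpansive map on such a weak$^*$-compact set. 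This is where premonotonicity (condition (2)) is consumed --- not in the gliding-hump perturbation.

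Second, the quantitative endgame does not close as written. Condition (3.b) controls $\limsup_n\Vert x_n+\lambda z\Vert$ for a \emph{coordinate-null} sequence and a finitely supported $z$, while (3.a) only gives information as $\lambda\to 0^+$; but to estimate $\limsup_n\Vert x_n-u\Vert=\limsup_n\Vert x_n-P_ku-Q_ku\Vert$ you need $\lambda=1$, and the sequence to which you would then feed condition (1), namely $x_n-Q_ku$, is neither coordinate-null nor block basic. As proposed, (1) is applied to $Q_ku+x_n$ and (3.b) to $x_n\pm\lambda P_ku$, yet neither quantity is $\Vert x_n-u\Vert$, the passage from small $\lambda$ to $\lambda=1$ (which is where convexity and the specific thresholds $R_0>5$, $M<1$ must enter) is missing, and those constants never actually appear in your inequalities. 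You yourself flag this as ``the technical crux,'' which is an accurate self-diagnosis: it is the proof, not a verification to be deferred. In [jfa] the contradiction is reached as a numerical inequality of the form $2\le{}$(something provably $<2$), obtained by testing (1) and (3) on carefully chosen convex blocks of the afps and their weak$^*$ limits, rather than by showing $K=\{0\}$.
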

 We will   make use of the following technical lemma:

\begin{lem}\cite[Lemma 3.3]{ja} \label{ss}Let $(X,\V\cdot\V)$ be a Banach space with a Schauder basis $\{e_n\}$. The norm $\V\cdot\V$ is sequentially separating if and only if
 $$
\lim_k \inf\{ \limsup_n\V x+x_n\V \}= 2,
$$
where the infimum is taken over all $x\in X$ such that $x=\sum_{i\ge k} x(i) e_i$
  with $\V x\V=1$  and all normalized block basic sequences $(x_n)\subset X$.

\end{lem}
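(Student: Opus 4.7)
The plan is to prove both implications directly from the definitions, writing $\phi(k)$ for the infimum appearing in the statement. \textbf{Forward direction.} Assume the norm is sequentially separating, fix $\e>0$, and let $k$ be as in that definition. Applied to any $x$ with $\V x\V=1$, $k\le x$, and any normalized block basic sequence $(x_n)$, the defining inequality reads $2\le(1+\e)\limsup_n\V x+x_n\V$, hence $\limsup_n\V x+x_n\V\ge 2/(1+\e)$. Combined with the triangle bound $\limsup_n\V x+x_n\V\le 2$ and the observation that $\phi$ is non-decreasing in $k$ (its admissible set of $x$'s shrinks as $k$ grows), this forces $\lim_k\phi(k)=2$.

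\textbf{Converse direction.} Assume $\lim_k\phi(k)=2$, fix $\e>0$, and choose $\e'>0$ so small that $(1+\e)(1-2\e')\ge 1$. Pick $k$ with $\phi(k)\ge 2-\e'$. Given any $x$ with $k\le x$ and any block basic sequence $(x_n)$, the degenerate cases $\V x\V=0$ and $\limsup_n\V x_n\V=0$ are immediate, so assume $\V x\V>0$ and $L:=\limsup_n\V x_n\V>0$. Passing to a subsequence along which $\V x_n\V\to L$, set $u:=x/\V x\V$ and $u_n:=x_n/\V x_n\V$; then $(u_n)$ is a normalized block basic sequence, $u$ has unit norm with $k\le u$, so the hypothesis gives $\limsup_n\V u+u_n\V\ge 2-\e'$.

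The crux is to transfer this normalized estimate back to $\V x+x_n\V$ via duality. For each $n$, pick $f_n^*\in X^*$ with $\V f_n^*\V=1$ and $f_n^*(u+u_n)\ge\V u+u_n\V-1/n$. Since $|f_n^*(u)|,|f_n^*(u_n)|\le 1$ while their sum eventually exceeds $2-2\e'$, both quantities lie in $[1-2\e',1]$ for $n$ large along a further subsequence. Writing $x=\V x\V u$ and $x_n=\V x_n\V u_n$ then gives
$$\V x+x_n\V\ge f_n^*(x+x_n)=\V x\V f_n^*(u)+\V x_n\V f_n^*(u_n),$$
and taking $\limsup$ yields $\limsup_n\V x+x_n\V\ge(\V x\V+L)(1-2\e')$, which rearranges into the sequentially separating inequality by the choice of $\e'$. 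The main obstacle is precisely this upgrade from normalized pairs---which $\phi(k)$ directly controls---to arbitrarily scaled pairs; the supporting-functional argument does the heavy lifting while cleanly handling the joint subsequence and $\limsup$ passages.
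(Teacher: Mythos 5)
Your argument is correct. Note that the paper does not actually prove this lemma --- it is quoted verbatim from \cite[Lemma 3.3]{ja} --- so there is no in-text proof to compare against; judged on its own terms, your two-step scheme (monotonicity of the infimum $\phi(k)$ plus the triangle inequality for the forward direction; normalization combined with norming functionals to upgrade the estimate on $\limsup_n\Vert u+u_n\Vert$ for unit vectors to the inhomogeneous inequality $\Vert x\Vert+\limsup_n\Vert x_n\Vert\le(1+\epsilon)\limsup_n\Vert x+x_n\Vert$) is exactly the natural route, and the key step --- deducing $f_n^*(u)\ge 1-2\epsilon'$ and $f_n^*(u_n)\ge 1-2\epsilon'$ from $f_n^*(u)+f_n^*(u_n)\ge 2-2\epsilon'$ and $\Vert f_n^*\Vert=1$, then rescaling by $\Vert x\Vert$ and $\Vert x_n\Vert$ --- is sound. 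Two very minor points worth recording: you should dispose explicitly of the case $\limsup_n\Vert x_n\Vert=+\infty$ (block basic sequences need not be bounded; the inequality is then trivial since $\Vert x+x_n\Vert\ge\Vert x_n\Vert-\Vert x\Vert$), and when you pass to the sub-subsequence along which $\Vert x_n\Vert\to L$ and the functional bounds hold, you should note that a lower bound for the limit along that sub-subsequence is in particular a lower bound for $\limsup_n\Vert x+x_n\Vert$ over the original sequence. Neither affects the validity of the proof.
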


We recall that a Banach space $X$ is said to be hereditarily $\ell_1$ if every closed subspace contains a further subspace which is isomorphic to $\ell_1$. Now the main result is the following:

\begin{theorem} \label{nic} Let $(\Omega,\Sigma,\mu)$ be a $\sigma$-finite measure space,  $p:\Omega\to [1,+\infty]$ be a measurable function. If $p_-\left(\Omega\setminus p^{-1}(\{1\})\right)=1$, then $L^{p(\cdot)}(\Omega)$ contains a closed subspace with the FPP which is hereditarily $\ell_1$, and therefore nonreflexive.

\end{theorem}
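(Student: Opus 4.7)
The plan is to construct a closed subspace $Y\subset L^{p(\cdot)}(\Omega)$ as the closed linear span of a disjointly supported normalized sequence $(f_n)$ whose supports carry exponents decreasing toward $1$, and then to verify directly that the inherited Luxemburg norm on $Y$ is near-infinity concentrated in the sense of Definition \ref{jfa-nic} with respect to the basis $(f_n)$, so that the criterion of \cite{jfa} recalled above yields the FPP. The hereditarily $\ell_1$ (and hence nonreflexive) character of $Y$ will be a by-product of the same construction.

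\textbf{Construction and easy properties.} Since $p_-(\Omega\setminus p^{-1}(\{1\}))=1$, for each $n$ the set $A_n:=\{t\in\Omega: 1<p(t)\le 1+1/n\}$ has positive measure, and by $\sigma$-finiteness we can pick inductively a pairwise disjoint sequence of measurable subsets $E_n\subset A_n$ with $0<\mu(E_n)<\infty$. Setting $p_n:=p_+(E_n)$ and $f_n:=a_n\chi_{E_n}$ with $a_n>0$ chosen so that $\|f_n\|=1$, we let $Y$ be the closed linear span of $\{f_n\}$. Disjointness of the supports and the lattice structure of $L^{p(\cdot)}(\Omega)$ make $(f_n)$ a $1$-unconditional Schauder basis of $Y$; in particular the norm is premonotone, giving condition $(2)$ of Definition \ref{jfa-nic}. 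The bound $p_+(\bigcup_n E_n)\le p_1<\infty$ implies that the Luxemburg norm is order continuous on $Y$, and together with the absence of $c_0$ in $Y$ (implied by the $\ell_1$-estimates below), this gives that $(f_n)$ is boundedly complete.

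\textbf{Hereditarily $\ell_1$ and sequential separation.} Any normalized block basic sequence $(u_j)$ of $(f_n)$ has disjoint supports contained in $\bigcup_{n\ge k_j}E_n$ with $k_j\to\infty$, so the supports of the $u_j$ carry exponents in $(1,1+1/k_j]$. Using the modular additivity $\rho(\sum_j b_j u_j)=\sum_j \rho(b_j u_j)$ together with the scaling estimates $|c|\rho(u_j)\le \rho(c u_j)\le |c|^{p_{k_j}}\rho(u_j)$ for $|c|\ge 1$ from Lemma \ref{properties}.b) (and their reverses for $|c|\le 1$), the norm of $\sum_j b_j u_j$ tends to the $\ell_1$-sum $\sum_j|b_j|$ as $k_j\to\infty$. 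Extracting a subsequence yields one that is equivalent to the unit vector basis of $\ell_1$; since every normalized basic sequence in $Y$ can be approximated by a block sequence via Bessaga--Pe{\l}czy\'nski selection, $Y$ is hereditarily $\ell_1$ and hence nonreflexive. Combining these $\ell_1$-asymptotics with Lemma \ref{ss} gives $\lim_k\inf\limsup_n\|x+x_n\|=2$ along all admissible choices, so the norm on $Y$ is sequentially separating, establishing condition $(1)$ of Definition \ref{jfa-nic}.

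\textbf{The $F_k$ condition and the main obstacle.} The delicate ingredient is condition $(3)$ of Definition \ref{jfa-nic}. Fix $k\in\N$ and let $z\in Y$ have support in $\{1,\dots,k\}$, so $z$ is supported inside $E_1\cup\cdots\cup E_k$ where $p$ is bounded below by $q_k:=p_-(E_1\cup\cdots\cup E_k)>1$. Given a bounded coordinate-null sequence $(x_n)\subset Y$ with $\liminf_n\|x_n\|\ge 1$, the supports of $x_n$ and $z$ are eventually disjoint, so $\rho(x_n+\lambda z)=\rho(x_n)+\rho(\lambda z)$. Lemma \ref{properties}.b.ii) gives $\rho(\lambda z)\le \lambda^{q_k}\rho(z)$ for $0<\lambda\le 1$, and converting back to the Luxemburg norm via Lemma \ref{properties}.a) yields an estimate of the form $\limsup_n\|x_n+\lambda z\|\le\limsup_n\|x_n\|+F_k(\lambda)\|z\|$ with $F_k(\lambda)=O(\lambda^{q_k})$ as $\lambda\to 0^+$. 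Since $q_k>1$ this forces $\lim_{\lambda\to 0^+}F_k(\lambda)/\lambda=0$, comfortably beating any threshold $M/R_0$ with $M<1$ and $R_0>5$. The main obstacle is precisely this conversion from modular to norm estimate: because the Luxemburg norm is only implicitly defined, one has to split into cases according to whether $\|x_n+\lambda z\|$ exceeds $1$ or not and chain the piecewise bounds of Lemma \ref{properties}.b) carefully. Once condition $(3)$ is secured, all three n.i.c. properties hold, the criterion of \cite{jfa} applies, and $Y$ has the FPP, completing the proof.
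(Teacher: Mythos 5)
Your proposal is correct and follows essentially the same route as the paper: a disjointly supported normalized sequence living on exponent-annuli shrinking to $1$, premonotonicity and bounded completeness from disjointness and $p_+<\infty$ on the union of supports, sequential separation from the modular additivity estimate $\rho((x+x_n)/r)\ge r^{-\gamma}(\rho(x)+\rho(x_n))$ with exponents near $1$, and condition $(3)$ from the $\lambda^{q_k}$ gain on the head, yielding a near-infinity concentrated norm and the FPP via \cite{jfa}. The only substantive difference is that you flag the modular-to-norm conversion in condition $(3)$ as an obstacle and sketch it, whereas the paper executes it by directly verifying $\rho\bigl((x_n+\lambda z)/(\Vert x_n\Vert+\lambda^{\gamma_k}\Vert z\Vert^{\gamma_k})\bigr)\le 1$ (and the paper gets hereditarily $\ell_1$ by citing \cite[Corollary 7.7]{ja} from sequential separation rather than via Bessaga--Pe{\l}czy\'nski); your sketch is consistent with that computation.
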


\begin{proof}
From the assumptions,  we can find a decreasing sequence $(\gamma_n)$ in $(1,+\infty)$ such that $\lim_n \gamma_n=1$ and $A_n:=\{t\in \Omega: \gamma_{n}< p(t)\le\gamma_{n-1}\}$ has positive measure. Let $f_n$ be  a normalized function in  $L^{p(\cdot)}(\Omega)$ with supp$(f_n)\subset A_n$ for every $n\in\mathbb{N}$. We next prove that    the closed subspace $X$ generated by the sequence $\{f_n\}$ is nonreflexive and satisfies the FPP:

Firstly, note that the sequence $\{f_n\}$ is a Schauder basis for $X$ which is boundedly complete (see for instance \cite[Definition 3.2.8]{A}), since the function $p(\cdot)$ is bounded in the union of all subsets $A_n$. Furthermore, the norm is premonotone.  We now check 
 conditions 1) and 3) in Definition \ref{jfa-nic}.

\medskip

 In order to prove $(1)$ we use Lemma \ref{ss}: Fix $k\in\mathbb{N}$ and let $x=\sum_{i\ge k}x(i)f_i$ with $\V x\V=1$. Let $(x_n)$ be a block basic sequence in $X$. Without loss of generality we can assume that
$\max$ supp $(x)<\min $ supp  $(x_n)$ in regards to their coordinates respect to the basis $(f_n)$. Note that if $t\in {\rm supp}\,(x)\cup {\rm supp}\,(x_n)$ then $p(t)<\gamma_{k-1}$. Besides,  $\rho(x)=\rho(x_n)=1$ for all $n\in\mathbb{N}$ since they are normalized vectors (see Lemma \ref{properties}.c.i  applied to a constant sequence and  taking in mind that the exponent function $p(\cdot)$ is bounded from above in the union of all the subsets $A_n$). For $r:=2^{1\over \gamma_{k-1}}$ we have:
{\small
$$
\begin{array}{lll}
\displaystyle{\rho\left(x+x_n\over r\right)}&= &\displaystyle{\rho\left({x\over r}\right)+\rho\left({x_n\over r}\right)}\\
& = &\displaystyle{ \int_{{\rm supp}\,(x)} \left({1\over r}\right)^{p(t)} |x(t)|^{p(t)} d\mu 
       +  \int_{{\rm supp}\, (x_n)} \left({1\over r}\right)^{p(t)} |x_n(t)|^{p(t)} d\mu}\\
     &  \ge&  \displaystyle{ \left({1\over r}\right)^{\gamma_{k-1}}\rho(x) +\left({1\over r}\right)^{\gamma_{k-1}}\rho(x_n)=1.}\\
\end{array}
$$
}
Therefore, $\limsup_n\V x+x_n\V\ge 2^{1\over \gamma_{k-1}}$. Taking limits when $k$ goes to infi\-nity we deduce  that the norm is sequentially separating. This in particular implies that $X$ is nonreflexive, since  it is hereditarily $\ell_1$ \cite[Corollary 7.7]{ja}.

\medskip

We next prove condition (3) in Definition \ref{jfa-nic}: Take any $R_0>0$ and $k\in\mathbb{N}$.
Let $(x_n)$ be a block basic sequence with $\liminf_n\V x_n\V\ge 1$ and $z\in X$ with $z=\sum_{i=1}^k z(i) f_i$ and $\V z\V\le R_0$. We can assume, without loss of generality,  that $x_n=\sum_{i=k+1}^\infty x_n(i)f_i$ and $\V x_n\V\ge 1$ for all $n\in\mathbb{N}$.
We start by proving:
\begin{equation}\label{a}
\V x_n+\lambda z\V\le \V x_n\V + \lambda^{\gamma_k}\V z\V^{\gamma_k} \qquad \forall \lambda\le R_0^{-1}.
\end{equation}

Indeed, note that $p(t)> \gamma_k$ for all $t\in {\rm supp}\,(z)$ and  $\lambda \V z\V\le 1\le \V x_n\V+\lambda^{\gamma_k}\V z\V^{\gamma_k}$ when $\lambda\le R_0^{-1}$.  Furthermore, $(\V x_n\V+\lambda^{\gamma_k}\V z\V^{\gamma_k})^{\gamma_k-1}\ge 1$.  This implies that:

$$
\begin{array}{lll}
&&\rho\left({x_n+\lambda z\over \V x_n\V+\lambda^{\gamma_k} \V z\V^{\gamma_k}}\right)=  \rho\left({ \V x_n\V\over \V x_n\V+\lambda^{\gamma_k} \V z\V^{\gamma_k}}{x_n\over \V x_n\V}\right)+ \rho\left( {\lambda \V z\V\over \V x_n\V+\lambda^{\gamma_k} \V z\V^{\gamma_k}}{z\over\V z\V}\right)\\
& \le &  { \V x_n\V\over \V x_n\V+\lambda^{\gamma_k} \V z\V^{\gamma_k}} \rho\left(x_n\over \V x_n\V\right)+\left( {\lambda \V z\V\over \V x_n\V+\lambda^{\gamma_k} \V z\V^{\gamma_k}}\right)^{\gamma_k}\rho\left({z\over \V z\V}\right)\\
& \le  &  { \V x_n\V\over \V x_n\V+\lambda^{\gamma_k} \V z\V^{\gamma_k}} + \left({\lambda \V z\V\over \V x_n\V+\lambda^{\gamma_k} \V z\V^{\gamma_k}}\right)^{\gamma_k}\\
  &= &\displaystyle{   {\V x_n\V(\V x_n\V+\lambda^{\gamma_k}\V z\V^{\gamma_k})^{\gamma_k-1} +\lambda^{\gamma_k} \V z\V ^{\gamma_k}\over (\V x_n\V+\lambda^{\gamma_k} \V z\V^{\gamma_k})^{\gamma_k}}}\\
  &\le &\displaystyle{   {\V x_n\V(\V x_n\V+\lambda^{\gamma_k}\V z\V^{\gamma_k})^{\gamma_k-1} +\lambda^{\gamma_k} \V z\V^{\gamma_k}(\V x_n\V+\lambda^{\gamma_k}\V z\V^{\gamma_k})^{\gamma_k-1}\over (\V x_n\V+\lambda^{\gamma_k} \V z\V^{\gamma_k})^{\gamma_k}}}=1.\\
\end{array}
$$

Therefore,  if $\lambda\le R_0^{-1}$:
$$
\begin{array}{lll}
\limsup_n\V x_n+\lambda z\V& \le & \limsup_n\V x_n\V + \lambda^{\gamma_k} \V z\V^{\gamma_k}\\
& = &\limsup_n\V x_n\V + \lambda^{\gamma_k} \V z\V^{{\gamma_k}-1}\V z\V\\
 &\le &\limsup_n\V x_n\V + \lambda^{\gamma_k} R_0^{\gamma_k-1}\V z\V.\\
\end{array}
$$
Thus,  we can consider $F_k(\lambda)=\lambda$ if $\lambda >R_0^{-1}$ and $F_k(\lambda)=\lambda^{\gamma_k}R_0^{\gamma_k-1}$ otherwise. Now it is clear that $\lim_{\lambda\to 0}{F_k(\lambda)\over \lambda}=0$ and this shows that the norm on $X$ is  n.i.c. Consequently  $X$ verifies the FPP as we wanted to prove.

\end{proof}

Likewise, it can proved that  the Luxemburg norm in Musielak-Orlicz  sequence spaces $\ell^{p_n}$  is near-infinity concentrated when the sequence $\{p_n\}\subset (1,+\infty)$ and $\lim_n p_n=1$. This drives us  to discover a family of  classic sequence Banach spaces which are nonreflexive and enjoy the FPP  without enduring any renorming process (and non-isomomorphic to $\ell_1$ \cite{Na}):

\begin{cor} \label{fpp2} Let $ \{p_n\}$ be a  sequence in $(1,+\infty)$  with $\lim_n p_n=1$. Then  the sequence  Musielak-Orlicz space $\ell^{p_n}$ endowed with the Luxemburg norm
enjoys the FPP.
\end{cor}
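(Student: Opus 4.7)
The plan is to establish that the Luxemburg norm on $\ell^{p_n}$ is near-infinity concentrated (n.i.c.) with respect to the canonical basis $\{e_n\}$ and then apply the main theorem of \cite{jfa}. The overall scheme parallels that of Theorem \ref{nic}, with the exponent-controlling quantities now supplied directly by the sequence $\{p_n\}$: for each $k \in \mathbb{N}$ I set $\gamma_{k-1} := \sup_{i \ge k} p_i$ (so $\gamma_{k-1} \downarrow 1$ since $p_n \to 1$) and $\tilde{\gamma}_k := \min_{1 \le i \le k} p_i > 1$.

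First I would observe that $(e_n)$ is a Schauder basis and that the Luxemburg norm is trivially premonotone, since truncating coordinates can only decrease the modular $\rho(x) = \sum_n |x(n)|^{p_n}$. To verify condition $(1)$ of Definition \ref{jfa-nic}, I would invoke Lemma \ref{ss}: given $x$ supported in $[k,\infty)$ with $\V x\V = 1$ and a normalized block basic sequence $(x_n)$ (which may be assumed supported strictly after $x$), every exponent appearing is at most $\gamma_{k-1}$, so with $r = 2^{1/\gamma_{k-1}}$ the same modular computation as in the proof of Theorem \ref{nic} yields $\rho((x+x_n)/r) \ge 1$, whence $\limsup_n \V x + x_n\V \ge r \to 2$ as $k \to \infty$. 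By \cite[Corollary 7.7]{ja} this already makes $\ell^{p_n}$ hereditarily $\ell_1$, so it contains no isomorphic copy of $c_0$ and hence the canonical basis is boundedly complete.

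To verify condition $(3)$, I would fix some $R_0 > 5$ and, for each $k$, observe that any $z$ with ${\rm supp}\,(z) \subset [1,k]$ has all its coordinate-exponents bounded below by $\tilde{\gamma}_k > 1$. For a bounded coordinate-null sequence $(x_n)$ with $\liminf_n \V x_n\V \ge 1$, a standard gliding-hump extraction produces a subsequence that is an arbitrarily small perturbation of a block basic sequence supported in $[k+1,\infty)$, reducing the estimate to the block basic setting of Theorem \ref{nic}. Re-running the chain of modular inequalities there, but now with $\tilde{\gamma}_k$ playing the role of $\gamma_k$, delivers $\V x_n + \lambda z\V \le \V x_n\V + \lambda^{\tilde{\gamma}_k} R_0^{\tilde{\gamma}_k - 1} \V z\V$ for every $\lambda \le R_0^{-1}$. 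Setting $F_k(\lambda) := \lambda^{\tilde{\gamma}_k} R_0^{\tilde{\gamma}_k - 1}$ for $\lambda \le R_0^{-1}$ and $F_k(\lambda) := \lambda$ otherwise, the condition $\tilde{\gamma}_k > 1$ forces $F_k(\lambda)/\lambda \to 0$ as $\lambda \to 0^+$.

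The main technical obstacle is the passage from block basic sequences (as treated in the proof of Theorem \ref{nic}) to arbitrary bounded coordinate-null sequences as required by Definition \ref{jfa-nic}(3.b); this is dispatched by a routine perturbation argument. Once the n.i.c.\ conditions are verified and the basis is shown to be boundedly complete, the main theorem of \cite{jfa} applies and delivers the FPP for $\ell^{p_n}$ endowed with its Luxemburg norm.
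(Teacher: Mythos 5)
Your proposal is correct and follows essentially the same route as the paper: the paper's own proof of Corollary \ref{fpp2} consists precisely of taking $A_n=\{n\}$ and ``mimicking the same steps as in the proof of Theorem \ref{nic}'', and your choices $\gamma_{k-1}=\sup_{i\ge k}p_i$ and $\tilde\gamma_k=\min_{1\le i\le k}p_i$ are exactly the right way to carry out that mimicry for a possibly non-monotone sequence $(p_n)$, with the perturbation step for coordinate-null sequences handled (even more simply than by gliding hump) by noting that $\V P_{k+1}x_n\V\to 0$. One small caveat: ``no copy of $c_0$ hence the basis is boundedly complete'' is only valid because the canonical basis of $\ell^{p_n}$ is unconditional (it fails for general Schauder bases, e.g.\ in James's space); alternatively, bounded completeness follows directly from $\sup_n p_n<\infty$ as in the proof of Theorem \ref{nic}.
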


\begin{proof}
Indeed, consider the standard   Schauder basis $\{e_n\}$ in $\ell^{p_n}$. Take
 $A_n=\{n\}$ for every $n\in\mathbb{N}$,  the variable function  $p(n)=p_n$ with $\lim_n p_n=1$. Now we are done just mimicking the same steps as in the proof of Theorem \ref{nic}.

\end{proof}

Note that, as far as the authors are concerned, the  Musielak-Orlicz spaces $\ell^{p_n}$ with the Luxemburg norm are the first known nonreflexive Banach spaces satisfying the FPP  without having to undergo any  renorming process. 
We would like to point out that,  directly  referring  to  \cite{Z}, in \cite{ja} it was quoted that: \lq\lq $\ell^{p_n}$ with $\lim_n p_n=1$   endowed with the Luxemburg norm fails the FPP because it contains an asymptotically isometric copy of $\ell_1$" (see definition and its consequences in the next section).  Corollary \ref{fpp2}  shows  that this is not possible. In fact, after a careful reading, the authors of this manuscript strongly believe that there is a misunderstanding between the Luxemburg norm and the Orlicz norm  in \cite{Z}.

\medskip

Finally, we conclude this section with the following corollary:

\begin{cor} \label{subspaces} Let $(\Omega,\Sigma,\mu)$ be a $\sigma$-finite measure space and $p:\Omega\to [1,+\infty]$ be a measurable function such that $L^{p(\cdot)}(\Omega)$ is nonreflexive. 
Assume that one of the following conditions holds:

\begin{itemize}

\item[a)] $L^{p(\cdot)}(\Omega)$ contains an isometric copy of $\ell_\infty$,

\item[b)] $p^{-1}(\{ 1\})$ is essentially formed  by finitely many atoms at most, or

\item[c)]  $L^{p(\cdot)}(\Omega)$ does not contain isometrically $\ell_1$.

\end{itemize}

Then $L^{p(\cdot)}(\Omega)$ contains a further nonreflexive closed subspace  with the FPP.

\end{cor}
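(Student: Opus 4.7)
The plan is to show that in each of the three cases, $L^{p(\cdot)}(\Omega)$ either contains an isometric copy of $\ell_\infty$ or else satisfies the hypothesis $p_-(\Omega\setminus p^{-1}(\{1\}))=1$ of Theorem \ref{nic}. In the first situation, the universality statement recorded in Theorem \ref{iso} guarantees that $\ell_\infty$ contains an isometric copy of every separable Banach space; fixing any sequence $(p_n)\subset(1,+\infty)$ with $\lim_n p_n=1$, the Nakano space $\ell^{p_n}$ therefore embeds isometrically into $\ell_\infty$ and hence into $L^{p(\cdot)}(\Omega)$, and by Corollary \ref{fpp2} it is a nonreflexive Banach space enjoying the FPP, giving the required closed subspace. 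In the second situation, Theorem \ref{nic} directly produces a nonreflexive closed subspace with the FPP. So the entire task is to verify, for each of a), b), c), that one of these two alternatives is in force.

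In case a) the first alternative is literally the hypothesis, so there is nothing further to do. For cases b) and c) I would appeal to Theorem \ref{refle}: nonreflexivity of $L^{p(\cdot)}(\Omega)$ forces at least one of the following to hold: $(\alpha)$ $p_+(\Omega^*)=+\infty$; $(\beta)$ $p_-(\Omega^*)=1$; $(\gamma)$ $p^{-1}(\{1,+\infty\})$ contains a nonatomic subset of positive measure; or $(\delta)$ $p^{-1}(\{1,+\infty\})$ contains infinitely many atoms. If $(\beta)$ holds, Theorem \ref{nic} applies immediately, so I may assume $(\beta)$ fails and argue that under b) or c) one of $(\alpha)$, $(\gamma)$, $(\delta)$ actually reduces to case a).

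Under hypothesis b), $p^{-1}(\{1\})$ is essentially a finite atomic set, so any nonatomic set of positive measure in $(\gamma)$ and any infinite family of atoms in $(\delta)$ must be located inside $p^{-1}(\{+\infty\})$; the essential-supremum contribution to the modular $\rho$ then yields an isometric copy of $L^\infty$ or of $\ell_\infty$, while $(\alpha)$ furnishes $\ell_\infty$ isometrically by Theorem \ref{iso}. Each of these outcomes contains $\ell_\infty$ isometrically and thus reduces to case a). Under hypothesis c), on the other hand, each of $(\alpha)$, $(\gamma)$, $(\delta)$ would produce an isometric copy of a space that itself contains $\ell_1$ isometrically, namely $\ell_\infty$ (from Theorem \ref{iso}) in case $(\alpha)$, $L^1[0,1]$ or $L^\infty[0,1]$ in case $(\gamma)$, or $\ell_1$ or $\ell_\infty$ in case $(\delta)$; any of these contradicts the standing assumption c), so only $(\beta)$ survives and Theorem \ref{nic} concludes the argument.

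The only delicate point is the bookkeeping above: one has to verify that in each case all sources of nonreflexivity except the one handled by Theorem \ref{nic} force an isometric copy of $\ell_\infty$ (for b) or of $\ell_1$ (for c). Apart from this case analysis no further analytic ingredient beyond Theorems \ref{iso}, \ref{refle}, \ref{nic} and Corollary \ref{fpp2} is needed; the genuinely new idea being exploited is that the nonreflexive FPP Nakano spaces of Corollary \ref{fpp2} can always be \emph{transplanted} isometrically into any VLS that happens to contain $\ell_\infty$, so the problem collapses to ruling out every route to nonreflexivity except the one to which Theorem \ref{nic} is tailored.
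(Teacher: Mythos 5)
Your proposal is correct and follows essentially the same route as the paper: invoke Theorem \ref{refle}.iii) to reduce nonreflexivity under b) or c) to the dichotomy ``isometric copy of $\ell_\infty$'' versus ``$p_-(\Omega\setminus p^{-1}(\{1\}))=1$'', handle the first alternative by transplanting the Nakano space of Corollary \ref{fpp2} via the universality of $\ell_\infty$ from Theorem \ref{iso}, and the second via Theorem \ref{nic}. Your explicit enumeration of the sub-cases $(\alpha)$--$(\delta)$ merely unpacks the bookkeeping that the paper leaves implicit.
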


\begin{proof} If $L^{p(\cdot)}(\Omega)$ contains isometrically $\ell_\infty$,    in particular  it contains isometrically the Musielak-Orlicz space of  Corollary \ref{fpp2}. If $L^{p(\cdot)}(\Omega)$ is nonreflexive and  b)  or c) holds, using Theorem \ref{refle}.iii), either we can find an isometric copy of $\ell_\infty$ or
 $p_-\left(\Omega\setminus p^{-1}(\{1\})\right)=1$ and we can apply  Theorem \ref{nic}.

\end{proof}

\section{The failure of asymptotically isometric copies of $\ell_1$ in Variable Lebesgue Spaces}

The failure of the FPP is strongly connected to the existence of asympto\-tically isometric copies of $\ell_1$, notion that  was first defined by Hagler \cite{H1} and revitalized several years later by P. Dowling and C. Lennard  \cite{DL}.
 We recall that a Banach space $X$ is said to contain an asymptotically isometric copy  (a.i.c.) of $\ell_1$  if there exist a sequence $(z_n)\subset X$
 and some sequence $(\epsilon_n)\subset (0,1)$ with $\lim_n\epsilon_n=0$ such that
\begin{equation}\label{a}
\sum_{n=1}^\infty (1-\epsilon_n)|t_n|\le \left\V \sum_{n=1}^\infty t_n z_n\right\V\le \sum_{n=1}^\infty |t_n|
\end{equation}
for all $(t_n)\in\ell_1$. It was proved in \cite{DL} (see also \cite{handbook}) that: 

\begin{itemize}

\item[$1)$]
 {\it  If $(z_n)$ spans an a.i.c. of $\ell_1$, then the closed span $[z_n]$ fails the FPP. Consequently, every Banach space containing an asymptotically isometric copy of $\ell_1$ fails to have the FPP}.

\item[$2)$] {\it Every nonreflexive closed subspace of $L^1[0,1]$ contains an asympto\-tically isometric copy of $\ell_1$.}

\end{itemize}

Theorem \ref{nic} shows that  the verbatim translation of the assertion 2) above does not follow when $L^1[0, 1]$ is replaced by a nonreflexive VLS. From Corollary \ref{subspaces}  we  know that every nonreflexive  $L^{p(\cdot)}(\Omega)$ contains a further nonreflexive  subspace with the FPP whenever it does not contain $\ell_1$ isometrically. In this latter case, we wonder whether the whole space $L^{p(\cdot)}(\Omega)$  could sa\-tisfy the FPP.  If an a.i.c. of $\ell_1$ were found  in $L^{p(\cdot)}(\Omega)$, the answer would be negative by assertion 1) above.  A natural question  arises:   Assume that $L^{p(\cdot)}(\Omega)$ does not contain $\ell_1$ isometrically. Can $L^{p(\cdot)}(\Omega)$ contain    an a.i.c. of $\ell_1$?
In this last section we  prove that the absence of an a.i.c. of $\ell_1$  is, in fact, the general rule in nonreflexive VLS.  To achieve our goals we need to introduce the subsequence splitting property  and some preliminar results
  (see for instance \cite{W}).

\begin{definition} A Banach function space $X$ is said to have  the subsequence splitting property (SSP) if for every bounded sequence $(f_n)\subset X$ there is a subsequence $(f_{n_k})$ and sequences $(g_k)$, $(h_k)$ with $|g_k|\wedge |h_k|=0$ and     $f_{n_k}=g_k+h_k$ for all $k\in\mathbb{N}$ such that
\begin{itemize}

\item[i)] The sequence $(g_k)$ is equi-integrable in $X$.
\item[ii)] The $h_k$'s are pairwise disjoint.

\end{itemize}
\end{definition}
 In the framework of a Banach lattice with an order continuous norm and  a weak unit, the SSP was completely characterized in \cite[Theorem 2.5]{W}:

\begin{theorem}\cite{W} \label{SSPt}Let $X$ be a Banach lattice with order continuous norm and  weak unit. Then $X$ has the SSP if and only if $\ell_\infty^n$'s are not equi-normably embedded into $X$ (see Definition 2.4 in \cite{W}).
\end{theorem}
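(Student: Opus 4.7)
The plan is to establish both directions of the equivalence via a Kadec--Pelczy\'nski type dichotomy, transferring the classical $L^1$ strategy to the order-continuous lattice setting by way of the weak unit.

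For necessity (SSP implies no equi-normable $\ell_\infty^n$'s), I would argue by contrapositive. Suppose that, for some constant $K\ge 1$ and every $n\in\mathbb{N}$, there exist $(u_i^{(n)})_{i=1}^n\subset X$ which are $K$-equivalent to the unit vector basis of $\ell_\infty^n$, normalized so that $\|u_i^{(n)}\|\le 1$ and $\|\sum_{i=1}^n \epsilon_i u_i^{(n)}\|\le K$ for every choice of signs. Concatenate all the $u_i^{(n)}$ into a single bounded sequence $(f_m)$ and show that no subsequence admits the required splitting. Indeed, in any decomposition $f_{m_k}=g_k+h_k$ with $(g_k)$ equi-integrable (i.e.\ with uniformly absolutely continuous norms) and $(h_k)$ pairwise disjoint, a fixed finite family $(u_i^{(n)})_{i=1}^n$ from the subsequence would force, via the sign-freedom of the $\ell_\infty^n$ basis, significant overlaps of the $|h_k|$'s supported on sets whose characteristic functions have norm bounded away from zero. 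Since equi-integrability forbids the $g_k$'s from absorbing this mass for large $n$, this contradicts the pairwise disjointness of the $h_k$'s.

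For sufficiency, given a bounded sequence $(f_n)$, I would choose a sequence $\lambda_k\uparrow\infty$ and set $A_k=\{|f_{n_k}|>\lambda_k\}$, $g_k=f_{n_k}\chi_{A_k^c}$, $h_k=f_{n_k}\chi_{A_k}$. The $g_k$'s lie in the order interval $[-\lambda_k e,\lambda_k e]$, where $e$ is the weak unit; by order continuity of the norm and a suitable diagonal choice of $\lambda_k$, I can pass to a subsequence on which the $g_k$'s have uniformly absolutely continuous norms (i.e.\ are equi-integrable). The delicate step is extracting a further subsequence along which the $h_k$'s are pairwise disjoint. I would run the following dichotomy: either the sets $A_k$ can be thinned to be essentially disjoint (after which any residual overlap is a small-norm perturbation reabsorbed into $(g_k)$), or infinitely many of the $h_k$'s genuinely overlap on sets of controlled norm. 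In the second alternative, a Ramsey-type extraction over overlap patterns, combined with the sign-freedom of the $\ell_\infty^n$ basis, should produce sign combinations witnessing a $K$-equivalent copy of $\ell_\infty^n$ for arbitrarily large $n$ with a single $K$ independent of $n$, contradicting the hypothesis.

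The main obstacle will be this last implication: quantitatively converting \emph{failure of disjointification} into the construction of uniformly well-embedded $\ell_\infty^n$'s. The technical heart is the Ramsey-type combinatorial extraction that selects indices whose overlaps are pairwise comparable, paired with a careful use of the weak unit to translate pointwise overlap into norm estimates on the sign combinations. A secondary subtlety is synchronizing the choice of the truncation levels $\lambda_k$ with the subsequence extraction so that the perturbations absorbed into $(g_k)$ remain compatible with its equi-integrability; this should be manageable by a standard diagonal argument once the dichotomy is in place.
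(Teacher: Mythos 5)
First, a point of reference: the paper does not prove this statement at all. It is quoted verbatim from Weis \cite{W} (Theorem 2.5 there), so there is no in-paper proof to compare your argument against; what you have written is a blind attempt at Weis's theorem itself.

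As such an attempt, your outline follows the natural Kadec--Pe\l czy\'nski/Rosenthal truncation strategy, but it has genuine gaps in both directions. In the necessity direction, concatenating the families $(u_i^{(n)})_{i=1}^n$ into one sequence $(f_m)$ does not yield a contradiction as stated: the subsequence $(f_{m_k})$ produced by the SSP may meet each block $\{u_1^{(n)},\dots,u_n^{(n)}\}$ in at most one element, and then no ``fixed finite family'' with sign-freedom survives inside the subsequence, so the overlap argument has nothing to act on. One must instead argue directly that a family with $\|x_i\|\ge c$ and $\bigl\|\sup_{i\le n}|x_i|\bigr\|\le 1$ cannot be split for large $n$ (the equi-integrable parts cannot carry mass bounded below on $n$ elements dominated by a single element of norm $1$, and pairwise disjoint parts would force $\bigl\|\sup_i|h_i|\bigr\|$ to be large); this also flags that Weis's Definition 2.4 of equi-normable embedding is a \emph{lattice} condition on $\bigl\|\sup_i|x_i|\bigr\|$ versus $\min_i\|x_i\|$, not the linear ``$K$-equivalent to the $\ell_\infty^n$ basis via signs'' condition you work with; passing between the two requires a Khintchine--Maurey type estimate that you never supply. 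In the sufficiency direction, the truncated parts $g_k$ lie in the growing order intervals $[-\lambda_k e,\lambda_k e]$, so their equi-integrability is exactly what must be proved, not a consequence of order continuity alone; and the step you yourself identify as ``the main obstacle'' --- converting failure of disjointification into uniformly embedded $\ell_\infty^n$'s --- is precisely the technical core of Weis's proof (which proceeds through an ultrapower of $X$ and the order continuity of its norm) and is left entirely unexecuted here. So the proposal is a plausible roadmap, not a proof.
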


As a consequence  we can deduce:

\begin{cor}\label{SSP}

Let $(\Omega,\Sigma,\mu)$ be a $\sigma$-finite measure space and $p:\Omega\to [1,+\infty]$ be a measurable function.   Then $L^{p(\cdot)}(\Omega)$ verifies the SSP  when $L^{p(\cdot)}(\Omega)$ contains no isometric copy of $\ell_\infty$.

\end{cor}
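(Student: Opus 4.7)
The plan is to verify the three hypotheses of Weis's Theorem \ref{SSPt} for $X = L^{p(\cdot)}(\Omega)$: order continuity of the Luxemburg norm, existence of a weak unit, and absence of equi-normable embeddings of the $\ell_\infty^n$'s. Once these are checked, Theorem \ref{SSPt} immediately delivers the SSP.

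For order continuity, the hypothesis that $L^{p(\cdot)}(\Omega)$ contains no isometric copy of $\ell_\infty$ forces, via Theorem \ref{iso}, that $p_+(\Omega_f) < +\infty$. Furthermore, $p^{-1}(\{+\infty\})$ can contain neither infinitely many atoms nor a nonatomic subset of positive measure, for otherwise $\ell_\infty$ (through the disjointly supported normalized characteristic functions of those atoms) or $L^\infty[0,1]$ (on the nonatomic part) would embed isometrically, contradicting the hypothesis. These are exactly the conditions identified in the preliminaries (just after Lemma \ref{properties}) as sufficient for the Luxemburg norm to be order continuous.

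For the weak unit, use the $\sigma$-finiteness of $\mu$: decompose $\Omega = \bigsqcup_{n\in \mathbb{N}} \Omega_n$ with $0 < \mu(\Omega_n) < +\infty$, and pick scalars $\alpha_n > 0$ small enough that $\sum_n \alpha_n \Vert \chi_{\Omega_n}\Vert < +\infty$. Then $e := \sum_n \alpha_n \chi_{\Omega_n}$ belongs to $L^{p(\cdot)}(\Omega)$ and is strictly positive a.e., hence is a weak unit of the lattice.

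The most delicate step, and the one I expect to be the main obstacle, is ruling out equi-normable embedding of the $\ell_\infty^n$'s. The line of attack is to exploit the Musielak–Orlicz characterization recalled in Section 2 (from \cite{He, Ka1}): the absence of an isometric copy of $\ell_\infty$ in $L^{p(\cdot)}(\Omega)$ is equivalent to the $\Delta_2$-condition, which yields uniform two-sided estimates between the Luxemburg norm and the modular on disjointly supported vectors, with constants independent of the number of blocks. Consequently, an equi-normable system of $\ell_\infty^n$'s in $L^{p(\cdot)}(\Omega)$ would, via a gliding-hump construction parallel to the one used to build the functions $f_n$ in the proof of Theorem \ref{iso}, produce a normalized sequence of disjointly supported functions spanning an isometric copy of $\ell_\infty$, contradicting the hypothesis. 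Making this gliding-hump step fully precise (so that the extracted blocks genuinely span \emph{isometric} rather than merely isomorphic $\ell_\infty$) is where the work lies, and will essentially reuse the scaling ideas from Theorem \ref{iso}.
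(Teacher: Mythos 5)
Your first two steps (order continuity via the contrapositive of Theorem \ref{iso} plus the analysis of $p^{-1}(\{+\infty\})$, and the explicit weak unit $e=\sum_n\alpha_n\chi_{\Omega_n}$) are correct and match what the paper does. The problem is the third hypothesis of Theorem \ref{SSPt}: you explicitly leave the exclusion of equi-normable embeddings of the $\ell_\infty^n$'s as an acknowledged ``obstacle,'' and the route you sketch for it is not viable. You propose to turn an equi-normable system of $\ell_\infty^n$'s into an \emph{isometric} copy of $\ell_\infty$ by a gliding hump ``reusing the scaling ideas from Theorem \ref{iso}.'' But the mechanism of Theorem \ref{iso} --- showing $\rho(\lambda f_n)=+\infty$ for every $\lambda>1$ so that the Luxemburg norm is exactly $1$ --- depends essentially on $p(t)\to+\infty$ on the supports of the humps, i.e.\ on $p_+=+\infty$. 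In your setting you have already established $p_+(\Omega_f)<+\infty$ (that is precisely how you got order continuity), so that scaling blow-up is unavailable and the gliding hump cannot produce isometric, as opposed to almost isometric, copies. In short, the one step that carries the mathematical content of the corollary is missing, and the plan for filling it would fail.

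What actually closes the gap is much more elementary and does not pass through isometric copies of $\ell_\infty$ at all: with $P_+:=p_+(\Omega_f)<+\infty$, a normalized $g$ satisfies $\rho(g)=1$ (Lemma \ref{properties}), the modular is additive on disjointly supported functions, and Lemma \ref{properties}.b.ii gives $\rho\bigl(g/(1+\epsilon)\bigr)\ge (1+\epsilon)^{-P_+}\rho(g)$. Hence if $g_1,\dots,g_n$ are disjointly supported, normalized, and $\bigl\Vert\sum_{i=1}^n g_i\bigr\Vert\le 1+\epsilon$, then
\begin{equation*}
1\;\ge\;\rho\Bigl(\tfrac{1}{1+\epsilon}\sum_{i=1}^n g_i\Bigr)\;=\;\sum_{i=1}^n\rho\Bigl(\tfrac{g_i}{1+\epsilon}\Bigr)\;\ge\;\frac{n}{(1+\epsilon)^{P_+}},
\end{equation*}
so $n\le(1+\epsilon)^{P_+}$, which is impossible for all $n$. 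This is the paper's argument; note that it uses only $p_+(\Omega_f)<+\infty$, so the $\Delta_2$ detour in your sketch is also unnecessary.
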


\begin{proof}
Since $(\Omega,\sigma,\mu)$ is $\sigma$-finite, it is not difficult to find  $g>0$ a.e. and therefore $g$ is a weak unit. The absence of isometric copies of $\ell_\infty$ implies that $P_+:=p_+(\Omega_f)<+\infty$, $p^{-1}(\{+\infty\})$ is essentially formed by  finitely many atoms at the most and the norm is order continuous. We next prove that $\ell_\infty^n$'s cannot be equi-normably embedded in $L^{p(\cdot)} (\Omega_f)$   (which automatically implies the same assertion for $L^{p(\cdot)} (\Omega)$ and the SPP by Theorem \ref{SSPt}):

Let $\epsilon>0$ and assume that for every $n\in\mathbb{N}$ we can find  $\{g_1,\cdots, g_n\}$ disjointly supported functions in $ L^{p(\cdot)} (\Omega_f)$ with $\V g_i\V=1$ for $1\le i\le n$ and $\left\V \sum_{i=1}^n g_i\right\V\le 1+\epsilon$. Using Lemma \ref{properties}.c.i) we know that $\rho(g_i)=1$  for $1\le i\le n$ and consequently:
$$
1\ge \rho\left({\sum_{1=1}^n g_i\over 1+\epsilon}\right)=\sum_{i=1}^n \rho\left({g_i\over 1+\epsilon}\right)\ge \sum_{i=1}^n \left({1\over 1+\epsilon}\right)^{P_+}\rho(g_i)={n\over (1+\epsilon)^{P_+}},
$$
which implies that $n\le (1+\epsilon)^{P_+}$ which is not possible. 

\end{proof}

Note that the following stability properties for asymptotically isometric copies of $\ell_1$ are easily obtained from the inequalities  in (\ref{a}):

\begin{itemize}

\item[i)] Every subsequence of an a.i.c. of $\ell_1$ and  every absolutely  convex block basic sequence of an a.i.c. of $\ell_1$ span again an a.i.c. of $\ell_1$.

\item[ii)]  If $(z_n)$ spans an a.i.c. of $\ell_1$,   $\V u_n\V\le 1$ and $\lim_n\V  z_n-u_n\V=0$,  removing finitely many  terms if necessary,  $(u_n)$ spans an a.i.c. of $\ell_1$.

\end{itemize}

\medskip

Finally, the main theorem of this section  aims  to show the lack of asymptotically isometric copies of $\ell_1$ in VLSs unless  the trivial case is considered:

\begin{theorem} \label{last} Let $(\Omega,\Sigma,\mu)$ be a $\sigma$-finite  measure space and $p:\Omega\to [1,+\infty]$ be a measurable function.The following are equivalent:

\begin{itemize}

\item[$a)$] $L^{p(\cdot)}(\Omega)$ contains an isometric copy of $\ell_1$.

\item[$b)$] $L^{p(\cdot)}(\Omega)$ contains an  asymptotically isometric copy of $\ell_1$.

\end{itemize}
\end{theorem}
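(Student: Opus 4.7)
Direction $(a)\Rightarrow(b)$ is immediate, since an isometric copy of $\ell_1$ satisfies the a.i.c.\ inequality (\ref{a}) with any $\epsilon_n\to 0^+$. For the nontrivial direction $(b)\Rightarrow(a)$, let $(z_n)\subset L^{p(\cdot)}(\Omega)$ span an a.i.c.\ of $\ell_1$ with constants $\epsilon_n\to 0$. If $L^{p(\cdot)}(\Omega)$ contains an isometric copy of $\ell_\infty$, then since $\ell_\infty$ contains every separable Banach space isometrically (as exploited at the end of Theorem \ref{iso}), in particular $\ell_1$ embeds isometrically into $L^{p(\cdot)}(\Omega)$. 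So assume $L^{p(\cdot)}(\Omega)$ contains no isometric copy of $\ell_\infty$; then by Theorem \ref{iso} one has $p_+(\Omega_f)<\infty$ and $p^{-1}(\{+\infty\})$ is essentially at most finitely many atoms, the Luxemburg norm is order continuous, and by Corollary \ref{SSP} the space has the subsequence splitting property.

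Apply the SSP to $(z_n)$: after extracting a subsequence, write $z_n=g_n+h_n$ with $|g_n|\wedge|h_n|=0$, $(g_n)$ equi-integrable and $(h_n)$ pairwise disjoint. The key technical step, and in my opinion the main obstacle, is to show that $\V g_n\V\to 0$ along a further subsequence. My plan is to argue via Rosenthal's $\ell_1$-theorem: equi-integrable bounded sequences in an order-continuous Banach function lattice are relatively weakly compact, so $(g_n)$ admits a weakly Cauchy subsequence; whereas the a.i.c.\ of $\ell_1$ hypothesis implies that $(z_n)$ is isomorphically equivalent to the canonical $\ell_1$-basis, so no subsequence of $(z_n)$ is weakly Cauchy. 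If $\V g_n\V$ did not vanish, a careful accounting of the disjoint correction $(h_n)$ against the weakly Cauchy tail of $(g_n)$ would produce a weakly Cauchy subsequence of $(z_n)$, a contradiction.

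Once $\V g_n\V\to 0$, the stability property (ii) stated just before Theorem \ref{last} transfers the a.i.c.\ of $\ell_1$ structure to the disjoint sequence $(h_n)$, and we may assume $\rho(h_n)=1$ after normalization. Since $p^{-1}(\{+\infty\})$ consists of finitely many atoms and the $h_n$'s are pairwise disjoint, only finitely many of them can meet $p^{-1}(\{+\infty\})$, so we may assume $\mathrm{supp}(h_n)\subset p^{-1}([1,+\infty))$. Decompose $h_n=h_n^{(1)}+h_n^*$ with $h_n^{(1)}:=h_n\chi_{p^{-1}(\{1\})}$ and $h_n^*:=h_n\chi_{\Omega^*}$, where $\Omega^*:=\Omega\setminus p^{-1}(\{1,+\infty\})$. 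Setting $a_n:=\int_{p^{-1}(\{1\})}|h_n|\,d\mu$ and $B_n:=\int_{\Omega^*}|h_n|^{p(t)}\,d\mu$, disjointness and Lemma \ref{properties}(b.ii) combined with $p(t)>1$ a.e.\ on $\Omega^*$ show that the $\Omega^*$-contribution to $\rho(\sum_{n=1}^N h_n/\lambda)$ decays strictly faster than the $\ell_1$-rate $1/\lambda$ as $\lambda$ grows. Matching this against the a.i.c.\ bound $\V\sum_{n=1}^N h_n\V\ge N-\sum_{n=1}^N\epsilon_n$ and using $a_n+B_n=1$, a modular estimate forces $\sum_{n=1}^N B_n=o(N)$; a diagonal extraction then yields $\V h_n^*\V\to 0$, and stability (ii) reduces us to the disjoint sequence $(h_n^{(1)})$ supported on $p^{-1}(\{1\})$ and still spanning an a.i.c.\ of $\ell_1$.

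On $p^{-1}(\{1\})$ the Luxemburg norm coincides with the $L^1$-norm, so any disjoint normalized sequence there automatically satisfies $\V\sum t_n h_n^{(1)}\V=\sum|t_n|$ for every finitely supported $(t_n)$. Hence $(h_n^{(1)})$ spans an isometric copy of $\ell_1$ in $L^{p(\cdot)}(\Omega)$, completing the proof.
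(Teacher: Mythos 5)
Your outline follows the paper's skeleton (the $\ell_\infty$ dichotomy, the subsequence splitting into an equi-integrable part plus a disjoint part, then an analysis of the disjoint a.i.c.\ on the region where $p>1$), but both of the pivotal steps are gapped. The first gap is the claim that $\Vert g_n\Vert\to 0$ along a subsequence. The justification you sketch does not work: writing $z_n=g_n+h_n$, even if $(g_n)$ has a weakly Cauchy (or weakly convergent) subsequence, this says nothing about $(z_n)$ being weakly Cauchy, because the disjoint part $(h_n)$ is typically equivalent to the $\ell_1$ basis and destroys weak Cauchyness; so no contradiction with Rosenthal's theorem arises. Worse, $\Vert g_n\Vert\to0$ is simply not a consequence of the hypotheses --- nothing prevents the equi-integrable parts from converging weakly to a nonzero limit. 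The paper never kills $(g_n)$ in norm: it passes to the differences $(f_{2n}-f_{2n-1})/2$, whose equi-integrable parts are weakly null, and then to Mazur convex blocks so that the equi-integrable contribution becomes norm null; the a.i.c.\ structure survives because absolutely convex blocks of an a.i.c.\ of $\ell_1$ are again an a.i.c.\ of $\ell_1$ (stability (i)), and only then does stability (ii) transfer it to the pairwise disjoint blocks. Some version of this detour is needed in your Step as well.

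The second gap is the endgame. You assert that since $p(t)>1$ a.e.\ on $\Omega^*$, the $\Omega^*$-contribution to the modular of $\sum_{n\le N}h_n/\lambda$ decays strictly faster than $1/\lambda$, forcing $\sum_{n\le N}B_n=o(N)$. That decay is faster than $1/\lambda$ for each fixed $n$, but not uniformly in $n$ when $p_-(\mathrm{supp}(h_n))\to1$, and this non-uniformity is exactly the crux of the theorem: the unit vector basis of $\ell^{p_n}$ with $p_n\downarrow1$ is a normalized disjoint sequence supported entirely in $\{t:p(t)>1\}$ with $\Vert\sum_{n\le N}e_n\Vert/N\to1$, so no Ces\`aro-type estimate of the kind you invoke can distinguish it from an a.i.c.\ of $\ell_1$. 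One must exploit the lower a.i.c.\ inequality for non-constant coefficient vectors. This is what the paper does: after truncating so that each $p_-(\mathrm{supp}(f_n))>1$ (its Step 2), fixing $p_1:=p_-(\mathrm{supp}(f_1))>1$ and passing to a subsequence with $n\epsilon_n\to0$, it tests $f_1/n+f_n$: the modular gives $\Vert f_1/n+f_n\Vert\le h^{-1}(n^{-p_1})=1+O(n^{-p_1})$ with $h(t)=t^{p_1-1}(t-1)$, while the a.i.c.\ lower bound gives at least $1+(1-\epsilon_1)/n-\epsilon_n\ge 1+c/n$, and these are incompatible for large $n$ precisely because $p_1>1$. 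Your proposal contains no substitute for this two-term test, and without it the argument does not close.
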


\begin{proof} $a)$ implies $b)$ is straightforward. Let us prove
 $b)$ implies $a)$: Assume  
 $L^{p(\cdot)}(\Omega)$ has a sequence $(f_n)$ spanning an a.i.c. of $\ell_1$.
Denote, as usually,  $\Omega_f=\{t\in \Omega: p(t)<\infty\}$ and we set $p^{-1}(\{1,+\infty\})=A\cup B$ where $A$ is the atomic part and $B$  a nonatomic set.

Assume, by contradiction,  that $L^{p(\cdot)}(\Omega)$ contains no  isometric copy of $\ell_1$. This automatically  implies that  $p_+(\Omega_f)<+\infty$, $A$ is formed by finitely many atoms (possible empty) and $\mu(B)=0$.
In view of  these assumptions we can assume that the Luxemburg  norm is order continuous and $f_n=f_n\chi_{\Omega\setminus B}$ in $L^{p(\cdot)}(\Omega)$.
  We split the proof in several steps:

\medskip

 Step 1: We can assume that the a.i.c. $(f_n)$ is pairwise disjoint and ${\rm supp}\,(f_n)\subset \Omega\setminus p^{-1}\{1,+\infty\})$ for all $n\in\mathbb{N}$ (since $p^{-1}(\{1,+\infty\})=A\cup B$,  $A$ is a collection of  a finite number of atoms and $\mu(B)=0$):

\medskip

  From Corollary \ref{SSP}, $L^{p(\cdot)}(\Omega)$ has the SSP and without loss of generality we can assume that $f_n=g_n+h_n$
where $|g_n|\wedge |h_n|=0$,  the $h_n$'s are pairwise disjoint, $(g_n)$ is equi-integrable and therefore  relatively weakly compact (see \cite[Definition 3.6.1 and Proposition 3.6.5]{MN}).
 Taking  a further subsequence, we can assume that $\{g_n\}$ is weakly convergent and so the sequence $g_n^\prime={g_{2n}-g_{2n-1}\over 2}$ is weakly convergent to zero. Using Mazur's Theorem, there exist $p_1\le q_1<p_2\le q_2<\cdots $ and a nonnegative sequence $\{\lambda_i\}$ such that $\sum_{i=p_n}^{q_n}\lambda_i=1$ and the sequence $\{G_n:=\sum_{i=p_n}^{q_n} \lambda_i g_i^\prime\}$ is norm null-convergent.

Define $F_n:=\sum_{i=p_n}^{q_n} \lambda_i {f_{2i}-f_{2i-1}\over 2}$ and $H_n=\sum_{i=p_n}^{q_n} \lambda_i {h_{2i} - h_{2i-1}\over 2}$ so 
 $F_n=H_n+G_n$ and $\lim_n \V F_n-H_n\V=\lim_n\V G_n\V=0$. From i) and ii)  above, $\{F_n\}$ spans an a.i.c. of $\ell_1$ and so does  $\{H_n\}$, which is pairwise disjoint.

\medskip

Step 2: We can suppose   that $p_-( {\rm supp}\,(f_n))>1$ for all $n\in\mathbb{N}$: 

 Take  a nonincreasing sequence $(\gamma_k)_k\subset (1,+\infty)$ with $\lim_k\gamma_k=1$.
Fix some $n\in\mathbb{N}$.
 Since ${\rm supp}\,(f_n)\subset \Omega\setminus p^{-1}(\{1,+\infty\})$ for all $n\in\mathbb{N}$, the sequence
$$
f_n\chi_{p^{-1}([1,\gamma_k])}\to_k 0 \hbox{ a.e. }t\in \Omega
$$
and therefore $\lim_k \rho(f_n\chi_{p^{-1}([1,\gamma_k])})=0$.
Since $p_+(\Omega_f)<+\infty$, from Lemma \ref{properties}.c.ii) we have $\lim_k\V f_n\chi_{p^{-1}([1,\gamma_k])}\V=0$.
Thus, for all $n\in\mathbb{N}$ we can consider some $k_n\in\mathbb{N}$ such that 
$$\lim_n \V  f_n - f_n\chi_{p^{-1}(\gamma_{k_n},\infty]}\V=0.
$$
Define $h_n:= f_n\chi_{p^{-1}(\gamma_{k_n},\infty]}$. Now the sequence $(h_n)$ is pairwise disjoint, spans an a.i.c. of $\ell_1$  and $p_-({\rm supp}\,(h_n))\ge \gamma_{k_n}>1$.

\medskip

Once that  Steps 1 and 2 have been proved, we can assume that $L^{p(\cdot)}(\Omega)$ contains a  pairwise disjoint sequence $(f_n)$ with $p_n:=p_-( A_n)>1$,  where $A_n:={\rm supp}\,(f_n)$ for all $n\in \mathbb{N}$  and spanning an a.i.c. of $\ell_1$.
 Let $(\epsilon_n)$ be the  null sequence  verifying  the inequalities (\ref{a}) for  $(f_n)$ and
take a subsequence $(\epsilon_{n_k})$ such that $\epsilon_{n_k}<{1\over k^2}$.   Defining $g_k=f_{n_k}$,  $\eta_k=\epsilon_{n_k}$,  the sequence $(g_k)$   spans an a.i.c. of $\ell_1$  and $\lim_k k \eta_k=0$.
Hence, we can assume that the sequences $(f_n)$ and $(\epsilon_n)$  obtained in Step 2 additionally satisfy  $\lim_n n\epsilon_n=0$.

\medskip

 In particular,
 since $p_1>1$, there exists some $n_0\in\mathbb{N}$ such that:
\begin{equation}\label{n}
n^{1-p_1\over 2}+ n \epsilon_n<1-\epsilon_1\quad\mbox{for all $n\ge n_0$}.
\end{equation}

Consider the function $h:[1,+\infty)\to [0,+\infty)$ by $h(t)=t^{(p_1-1)}(t-1)$, which  is strictly increasing in $[1,+\infty)$, 
 $h(1)=0$ and $\lim_{t\to+\infty}h(t)=+\infty$. Thus $h$ is bijective and bicontinuous.  We claim that
$$
\left\V{f_{1}\over n}+f_n\right\V\le h^{-1}\left({1\over n^{p_1}}\right) \mbox{for all $n\in\mathbb{N}$.}
$$
Indeed, fix some $n\in\mathbb{N}$. We rename   $s:=h^{-1}\left({1\over n^{p_1}}\right)>1$.
Then:
$$
\begin{array}{lll}
\displaystyle{\rho\left({{f_1\over n}+f_n\over s}\right)}&=&\displaystyle{\int_{A_1}\left(1\over ns\right)^{p(t)}|f_1(t)|^{p(t)} d\mu+\int_{A_n}\left(1\over s\right)^{p(t)}|f_n(t)|^{p(t)} d\mu}\\
&\le & \displaystyle{\left(1\over ns\right)^{p_1}\rho(f_1)+{1\over s}\rho(f_n)\le \left(1\over ns\right)^{p_1}+{1\over s}=1}\\
\end{array}
$$
and the claim is proved. 
From the left-hand side of the inequalities (\ref{a}) and using (\ref{n}), for all $n\ge n_0$ we have that:
$$
h^{-1}\left({1\over n^{p_1}}\right)\ge {1-\epsilon_1\over n}+1-\epsilon_n\ge n^{-1-{p_1}\over 2} + 1.
$$
Since the function $h$ is strictly increasing, the above implies that
$$
h\left(n^{-{{p_1}+1\over 2}}+1\right)=\left(n^{-{{p_1}+1\over 2}}+1\right)^{p_1-1}n^{-{p_1+1\over 2}}\le{1\over n^{p_1}}
$$
and
$$
\left(n^{-{p_1+1\over 2}}+1\right)^{p_1-1}\le n^{1-p_1\over 2}.
$$
Taking limits when $n$ goes to infinity, we arrive at the inequality $1\le 0$, which shows that $L^{p(\cdot)}(\Omega)$ cannot have an a.i.c. of $\ell_1$ in the absence of isometric copies of $\ell_1$ and the proof is complete.

\end{proof}

It is uncertain for the authors whether  every nonreflexive $L^{p(\cdot)}(\Omega)$ without an isometric copy of $\ell_1$ may satisfy  the FPP. What can at once be deduced from Theorem \ref{last}  is that, in order  to assert the failure of the FPP,  
either a precise counterexample would need to  be found or new alternative techniques would need to emerge, since asymptotically isometric copies of $\ell_1$ do not play any relevant role in contrast to the $L^1$-case.

A seemingly much easier problem does not have a precise answer yet:
 Consider a purely atomic $\sigma$-finite measure space and a bounded sequence $(p_n)\subset [1,+\infty)$ with $\liminf_n p_n=1$ (which implies that it is nonreflexive).  Does the Musielak-Orlicz space $\ell^{p_n}$ with the Luxemburg norm  have the FPP? (We only know that the answer is positive when $\lim_np_n=1$ due to Corollary \ref{fpp2}).

\end{document}